\providecommand{\U}[1]{\protect\rule{.1in}{.1in}}
\newtheorem{theorem}{Theorem}
\theoremstyle{plain}
\newtheorem{corollary}{Corollary}
\newtheorem{definition}{Definition}
\newtheorem{example}{Example}
\newtheorem{lemma}{Lemma}
\newtheorem{proposition}{Proposition}
\numberwithin{equation}{section}
\begin{document}
\title[Weakly $J$-submodules of modules over commutative rings]{Weakly $J$-submodules of modules over commutative rings}
\author{Hani A. Khashan }
\address{Department of Mathematics, Faculty of Science, Al al-Bayt University, Al
Mafraq, Jordan.}
\email{hakhashan@aabu.edu.jo.}
\author{Ece Yetkin Celikel}
\address{Department of Electrical-Electronics Engineering, Faculty of Engineering,
Hasan Kalyoncu University, Gaziantep, Turkey.}
\email{ece.celikel@hku.edu.tr, yetkinece@gmail.com.}
\thanks{This paper is in final form and no version of it will be submitted for
publication elsewhere.}
\date{March, 2021}
\subjclass[2010]{13A15, 13A18, 13A99.}
\keywords{weakly $J$-submodule, $J$-submodule, $J$-ideal, weakly $J$-ideal.}

\begin{abstract}
Let $R$ be a commutative ring with identity and $M$ be a unitary $R$-module.
By $J(R),$ we denote the Jacobson radical of $R$. The purpose of this paper is
to introduce the concept of weakly $J$-submodules generalizing $J$-submodules.
We call a proper submodule $N$ of $M$ a weakly $J$-submodule if whenever
$0\neq rm\in N$ for $r\in R$ and $m\in M$, then $r\in(J(R)M:M)$ or $m\in N.$
Various properties and characterizations of weakly $J$-submodules are
investigated especially in the case of multiplication modules.

\end{abstract}
\maketitle

\section{Introduction}

Throughout, rings will be commutative with identity and modules are unital. We
denote by $R$ a ring, by $M$ a unitary $R$-module, and by $J(R)$ the Jacobson
radical of $R$ which is the intersection of all maximal ideals of $R$. The
radical of an ideal $I$ of $R$, denoted by $\sqrt{I}$, is defined to be the
set of all $a\in R$ for which $a^{n}\in I$ for some positive integer $n$. For
a submodule $N$ of $M$, we mean by $(N:M)$, the set of all elements $r$ of $R$
for which $rM\subseteq N$.

The concept of prime submodules, which is an important subject of module
theory, has been widely studied by various authors. Recall that a proper
submodule $N$ of an $R$-module $M$ is a (resp. weakly) prime submodule if for
$r\in R$ and $m\in M$ whenever (resp. $0\neq rm$ $\in N$) $rm$ $\in N$, then
$r\in(N:M)$ or $m\in N$, \cite{At}. In 2005, Atani and Farzalipour \cite{At2}
defined weakly primary submodules as follows: A proper submodule $N$ of $M$ is
said to be weakly primary if whenever $0\neq rm\in N$ for $r\in R$ and $m\in
M$, then $r\in\sqrt{(N:M)}$ or $m\in N$. In 2017, "Tekir et.al." \cite{Tekir}
defined the concept of $n$-ideals and $n$-submodules. The authors call a
proper submodule $N$ of $M$ an $n$-submodule if for $r\in R$ and $m\in M$,
$rm\in N$ with $r\notin\sqrt{(0:M)}$, then $m\in N$. Generalizing these
concepts, Khashan and Bani-Ata (2021) recently introduced and studied the
concepts of $J$-ideals and $J$-submodules in \cite{Hani}. According to
\cite{Hani}, a proper ideal $I$ of $R$ is said to be a $J$-ideal if whenever
$ab\in I$ for $a\in R\backslash J(R)$ and $b\in R$, then $b\in I$. As
extending this structure to the context of $J$-submodules, a proper submodule
$N$ of $M$ is called a $J$-submodule if whenever $r\in R$ and $m\in M$ such
that $rm\in N$ and $r\notin(J(R)M:M)$, then $m\in N$. As a very recent paper,
in \cite{Haniece3}, the notion of weakly $J$-ideals is presented. A weakly
$J$-ideal of $R$ is a proper ideal with the property that $a,b\in R$, $0\neq
ab\in I$ and $a\notin J(R)$ imply $b\in I$.

With $J$-submodules and weakly $J$-ideals in mind, we define the notion of
weakly $J$-submodules. We call a proper submodule $N$ of $M$ weakly
$J$-submodule if whenever $r\in R$ and $m\in M$ such that $0\neq rm\in N$ and
$r\notin(J(R)M:M)$, then $m\in N$. We obtain (Proposition \ref{Jp}) that
weakly $J$-submodules and $J$-submodules coincide in $J$-presimplifiable
modules. But, we start with some examples to show that in general, the class
of weakly $J$-submodules is a proper generalization of that of $J$-submodules.
Among many other results in this paper, in Lemma \ref{l1}, we give a condition
on a weakly $J$-submodule $N$ of an $R$-module $M$ contained in $J(R)M$ to be
a $J$-submodule. In Theorem \ref{eq1}, we conclude some equivalent statements
characterizing weakly $J$-submodules of modules. Also, we obtain some other
characterizations for weakly $J$-submodules of finitely generated faithful
multiplication modules (Corollary \ref{(N:M)}, Theorem \ref{fm} and Corollary
\ref{cm}). Moreover, we investigate the behavior of this structure under
module homomorphisms and localizations, (Propositions \ref{f}, \ref{S} and
Corollary \ref{quotient}).In Proposition \ref{wp}, we clarify a condition on
weakly primary submodules to be weakly $J$-submodules. It is shown in Theorem
\ref{max} that a maximal weakly $J$-submodule $N$ satisfying $(0:_{M}%
S)\subseteq N$ for any $S\nsubseteq J(R)$ of a finitely generated faithful
multiplication $R$-module $M$ is a $J$-submodule. Finally, in Proposition
\ref{id}, we give a characterization of weakly $J$-ideals in the idealization
ring $R(+)M$.

Let us recall that an $R$-module $M$ is called a multiplication module
provided that for every submodule $N$ of $M$, there exists an ideal
(presentation ideal) $I$ of $R$ such that $N=IM$, \cite{Bast}. If $N$ and $K$
are two submodules of a multiplication module $M$, define the multiplication
of $N$ and $K$ as $NK=IJM$ where $I$ and $J$ are the presentation ideals of
$N$ and $K$, respectively. Also, recall that the idealization ring of an
$R$-module $M$ is the set $R(+)M=R\oplus M=\left\{  (r,m):r\in R\text{, }%
m_{2}\in M\right\}  $ with coordinate-wise addition and multiplication defined
as $(r_{1},m_{1})(r_{2}m_{2})=(r_{1}r_{2},r_{1}m_{2}+r_{2}m_{1})$. If $I$ is
an ideal of $R$ and $N$ a submodule of $M$, then $I(+)N$ is an ideal of
$R(+)M$ if and only if $IM\subseteq N$. It is well known that if $I(+)N$ is an
ideal of $R(+)M$, then $\sqrt{I(+)N}=\sqrt{I}(+)M$. Moreover, we have
$J(R(+)M)=J(R)(+)M$, \cite{Anderson4}. For undefined notations or
terminologies in commutative ring and module theory, we refer the reader to
\cite{Bernard} and \cite{Sharp}.

\section{Properties of Weakly $J$-submodules}

\begin{definition}
Let $R$ be a ring and let $M$ be an $R$-module. A proper submodule $N$ of $M$
is called a weakly $J$-submodule if whenever $r\in R$ and $m\in M$ such that
$0\neq rm\in N$ and $r\notin(J(R)M:M)$, then $m\in N.$
\end{definition}

While clearly every $J$-submodule is a weakly $J$-submodule, the converse is
not necessarily true. In fact, by the definition, the zero submodule of any
module is a weakly $J$-submodule. However, $N=\left\langle 0\right\rangle $ is
not a $J$-submodule of the $%
\mathbb{Z}
$-module $%
\mathbb{Z}
_{6}$ since $2.\bar{3}\in N$ but $2\notin(J(%
\mathbb{Z}
)%
\mathbb{Z}
_{6}:%
\mathbb{Z}
_{6})=\left\langle 6\right\rangle $ and $\bar{3}\notin N$. The following is an
example of a nonzero weakly $J$-submodule that is not a $J$-submodule.

\begin{example}
\label{Ex1}Consider the $%
\mathbb{Z}
$-module $M=%
\mathbb{Z}
(+)\left(
\mathbb{Z}
_{2}\times%
\mathbb{Z}
_{2}\right)  $ and let $N=0(+)\left\langle (\bar{1},\bar{0})\right\rangle $.
We prove that $N$ is a weakly $J$-submodule of $M$. Let $r\in%
\mathbb{Z}
$ and $(r_{1},(\bar{a},\bar{b}))\in M$ such that $(0,(\bar{0},\bar{0}))\neq
r.(r_{1},(\bar{a},\bar{b}))\in N$ and $r\notin(J(%
\mathbb{Z}
)M:M)$. Then $(rr_{1},r.(\bar{a},\bar{b}))\in N\backslash(0,(\bar{0},\bar
{0}))$ and $rM\neq(0,(\bar{0},\bar{0}))$. Thus, $rr_{1}=0$, $r\neq0$ which
implies that $r_{1}=0$ and $r.(\bar{a},\bar{b})\in\left\langle (\bar{1}%
,\bar{0})\right\rangle \backslash(\bar{0},\bar{0})$. If $(\bar{a},\bar
{b})=(\bar{1},\bar{1})$ or $(\bar{0},\bar{1})$, then $r.(\bar{a},\bar{b}%
)\in\left\langle (1,0)\right\rangle $ only if $r\in\left\langle 2\right\rangle
$ and so $r.(\bar{a},\bar{b})=(\bar{0},\bar{0})$, a contradiction. Therefore,
$(\bar{a},\bar{b})\in\left\langle (\bar{1},\bar{0})\right\rangle $ and since
also $r_{1}=0$, we have $(r_{1},(\bar{a},\bar{b}))\in N$ as needed. On the
other hand, $N$ is not a $J$-submodule of $M$. For example, $2.(0,(\bar
{1},\bar{1}))=(0,(\bar{0},\bar{0}))\in N$ with $2\notin(J(%
\mathbb{Z}
)M:M)$ but $(0,(\bar{1},\bar{1}))\notin N$.
\end{example}

We next give a rather general condition on a weakly $J$-submodule $N$ of an
$R$-module $M$ contained in $J(R)M$ to be a $J$-submodule.

\begin{lemma}
\label{l1}Let $N$ be a weakly $J$-submodule of an $R$-module $M$ such that
$N\subseteq J(R)M$. If $N$ is not a $J$-submodule, then $(N:M)N=0$. In
particular, $(N:M)^{2}\subseteq Ann(M).$ Hence a weakly $J$-submodule
$N\subseteq J(R)M$ with $(N:M)N\neq0$ is $J$-submodule.
\end{lemma}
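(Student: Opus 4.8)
The plan is to argue by contradiction in the standard ``weakly'' style: assume $N\subseteq J(R)M$ is a weakly $J$-submodule that is \emph{not} a $J$-submodule, and deduce $(N:M)N=0$. Since $N$ fails to be a $J$-submodule, there exist $r_{0}\in R\setminus(J(R)M:M)$ and $m_{0}\in M\setminus N$ with $r_{0}m_{0}\in N$; because $N$ is weakly $J$, we must have $r_{0}m_{0}=0$. This pair $(r_{0},m_{0})$ will be the engine of the whole argument. First I would show $(N:M)m_{0}=0$: given $a\in(N:M)$, consider $(r_{0}+a)m_{0}$; it lies in $N$ (as $r_{0}m_{0}=0$ and $am_{0}\in aM\subseteq N$), and one checks $r_{0}+a\notin(J(R)M:M)$ — indeed if $r_{0}+a\in(J(R)M:M)$ then, since $a\in(N:M)\subseteq(J(R)M:M)$ because $N\subseteq J(R)M$, we would get $r_{0}\in(J(R)M:M)$, a contradiction. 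Hence by the weakly $J$-property applied to $(r_{0}+a)m_{0}\in N$ either $(r_{0}+a)m_{0}=0$, giving $am_{0}=0$, or $m_{0}\in N$, which is false; so $am_{0}=0$ for all $a\in(N:M)$, i.e. $(N:M)m_{0}=0$.

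Next I would upgrade this to $(N:M)N=0$. Take any $a\in(N:M)$ and any $n\in N$; I want $an=0$. Look at $(r_{0}+a)(m_{0}+n)$. Expanding, this equals $r_{0}m_{0}+r_{0}n+am_{0}+an = r_{0}n+an$ since $r_{0}m_{0}=0$ and $am_{0}=0$ by the previous step; and $r_{0}n+an\in N$ because $n\in N$ and $an\in aM\subseteq N$. Also $m_{0}+n\notin N$ (else $m_{0}\in N$), and as before $r_{0}+a\notin(J(R)M:M)$. So the weakly $J$-property forces $(r_{0}+a)(m_{0}+n)=0$, i.e. $(r_{0}+a)n = -an\in\,(r_{0}+a)N$. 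Hmm — to cleanly conclude I would rather run the argument so that I directly get $an=0$: from $(r_{0}+a)(m_{0}+n)\in N$ and $m_{0}+n\notin N$ and $r_{0}+a\notin(J(R)M:M)$, weakly $J$ gives $(r_{0}+a)(m_{0}+n)=0$, hence $r_{0}n=-an$. Now apply the same reasoning with $n$ replaced by $0$ (that's just $r_{0}m_{0}=0$, nothing new), so instead I compare: we have $r_{0}(m_{0}+n)=r_{0}n$ and this lies in $N$; if $r_{0}n\neq0$ then weakly $J$ plus $r_{0}\notin(J(R)M:M)$ forces $m_{0}+n\in N$, contradiction, so $r_{0}n=0$; combined with $r_{0}n=-an$ we get $an=0$. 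Thus $(N:M)N=0$.

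The two ``in particular'' clauses are then immediate formal consequences: $(N:M)^{2}M\subseteq(N:M)N=0$ shows $(N:M)^{2}\subseteq Ann(M)$, and the final sentence is just the contrapositive of the first: if $(N:M)N\neq0$ then $N$ cannot fail to be a $J$-submodule, so it is one. The main obstacle is getting the algebra of the ``translate by $r_{0}$, $a$, $n$'' trick exactly right — in particular verifying at each application that the perturbed ring element stays outside $(J(R)M:M)$ (which is where the hypothesis $N\subseteq J(R)M$ is genuinely used, via $(N:M)\subseteq(J(R)M:M)$) and that the perturbed module element stays outside $N$ — and organizing the case analysis ($r_{0}n=0$ vs.\ $r_{0}n\neq0$) so that it actually yields $an=0$ rather than a weaker relation. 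Everything else is bookkeeping.
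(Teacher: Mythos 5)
Your proposal is correct and is essentially the paper's proof run in the opposite logical direction: the paper argues contrapositively (assume $(N:M)N\neq 0$ and verify the $J$-submodule property via three cases $rN\neq 0$, $(N:M)m\neq 0$, and both zero), while you fix a witness $(r_{0},m_{0})$ of failure with $r_{0}m_{0}=0$ and perform the same three translation computations $(r_{0}+a)m_{0}$, $r_{0}(m_{0}+n)$, $(r_{0}+a)(m_{0}+n)$, using the same key fact $(N:M)\subseteq(J(R)M:M)$ coming from $N\subseteq J(R)M$. The argument is sound, including the final step recovering $an=0$ from $r_{0}n=0$ and $r_{0}n=-an$, and the two ``in particular'' clauses are handled as in the paper.
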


\begin{proof}
Assume on the contrary $(N:M)N\neq0$. We show that $N$ is a $J$-submodule. Let
$r\in R$ and $m\in M$ with $rm\in N$. If $rm\neq0$, then clearly we are done.
So, assume that $rm=0$. We have the following cases:

\textbf{Case 1.} Let $rN\neq0.$ Then $rn\neq0$ for some $n\in N.$ Hence,
$0\neq rn=r(n+m)\in N$ which implies either $r\in(J(R)M:M)$ or $n+m\in N$.
Thus, $r\in(J(R)M:M)$ or $m\in N$.

\textbf{Case 2. }Let $(N:M)m\neq0.$ Then there exists $a\in(N:M)$ satisfying
$am\neq0.$ Hence, $0\neq am=(r+a)m\in N$ which follows either $r+a\in
(J(R)M:M)$ or $m\in N.$ By assumption, $a\in(N:M)\subseteq(J(R)M:M)$ and so
$r\in(J(R)M:M)$ or $m\in N$.

\textbf{Case 3.} Let $rN=0$ and $(N:M)m=0$. Since $(N:M)N\neq0$, then
$bn_{1}\neq0$ for some $b\in(N:M)\subseteq(J(R)M:M)$ and $n_{1}\in N$. So,
$0\neq bn_{1}=(r+b)(n_{1}+m)\in N$ which implies $r+b\in(J(R)M:M)$ or
$n_{1}+m\in N.$ Thus, $r\in(J(R)M:M)$ or $m\in N$ and we are done.

In particular, $(N:M)^{2}\subseteq((N:M)N:M)=(0:M)=Ann(M).$
\end{proof}

In the following result, we give several characterizations for weakly
$J$-submodules of an $R$-module $M$.

\begin{theorem}
\label{eq1} For a proper submodule $N$ of an $R$-module $M$, the following
statements are equivalent:
\end{theorem}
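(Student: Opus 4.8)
The plan is to prove the equivalences by a cycle of implications, moving back and forth between the membership formulation of the definition and two pieces of standard machinery: the translation of ``$0\neq rm\in N$'' into colon-ideal statements, and the elementary fact that an additive subgroup of an abelian group which is contained in the union of two additive subgroups is already contained in one of them (applied both to ideals of $R$ and to submodules of $M$). I expect the list to contain, besides the definition, conditions of the following shapes: (a) for every $m\in M\setminus N$, $(N:m)\subseteq(J(R)M:M)$ or $(N:m)=(0:m)$; (b) for every $r\in R\setminus(J(R)M:M)$, $(N:_{M}r)=N$ or $(N:_{M}r)=(0:_{M}r)$, where $(N:_{M}r)=\{m\in M:rm\in N\}$; and (c) for every ideal $I$ of $R$ and every submodule $K$ of $M$, if $0\neq IK\subseteq N$ then $I\subseteq(J(R)M:M)$ or $K\subseteq N$.

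First I would dispatch the routine implications. Condition (c) implies the definition by taking $I=Rr$ and $K=Rm$. The definition implies (a): fixing $m\in M\setminus N$ and $r\in(N:m)$, either $rm=0$, so $r\in(0:m)$, or $rm\neq 0$, so $r\in(J(R)M:M)$; hence $(N:m)\subseteq(J(R)M:M)\cup(0:m)$, and the subgroup-union fact forces $(N:m)\subseteq(J(R)M:M)$ or $(N:m)\subseteq(0:m)$, the latter being an equality since $(0:m)\subseteq(N:m)$ always. Condition (a) implies the definition: if $0\neq rm\in N$ with $m\notin N$ and $r\notin(J(R)M:M)$, then $r\in(N:m)$ rules out $(N:m)\subseteq(J(R)M:M)$, so $(N:m)=(0:m)$, contradicting $rm\neq0$. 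The equivalence of (b) with the definition is obtained in exactly the same way, now using that a submodule contained in $N\cup(0:_{M}r)$ lies in one of the two.

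The substantive step is the implication from the definition to (c), and this is the part I expect to be the main obstacle; I would model it on the case analysis in the proof of Lemma \ref{l1}. Assume $N$ is a weakly $J$-submodule, $0\neq IK\subseteq N$, $K\nsubseteq N$, and fix $k_{0}\in K\setminus N$; the goal is that an arbitrary $a\in I$ lies in $(J(R)M:M)$. If $ak_{0}\neq0$ we are done at once, since $0\neq ak_{0}\in N$ and $k_{0}\notin N$. If $ak_{0}=0$, I would pass to a translate: if some $a'\in I$ has $a'k_{0}\neq0$, then $(a+a')k_{0}=a'k_{0}\neq0$ gives $a+a'\in(J(R)M:M)$, while $a'\in(J(R)M:M)$ by the previous case, so $a\in(J(R)M:M)$; and if $Ik_{0}=0$, I would choose $a_{1}\in I$, $k_{1}\in K$ with $a_{1}k_{1}\neq0$ and rerun the argument on $k_{0}+k_{1}$, splitting further according to whether $k_{0}+k_{1}\in N$ (if so, then $k_{1}\notin N$) and whether $ak_{1}=0$. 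Every branch collapses, after subtracting an element already known to lie in $(J(R)M:M)$, to a single application of the weakly $J$-submodule hypothesis, yielding $a\in(J(R)M:M)$; since $a\in I$ was arbitrary, $I\subseteq(J(R)M:M)$.

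Finally I would tie the colon conditions to (c) to close the cycle: $IK\subseteq N$ says $I\subseteq(N:k)$ for each $k\in K$, so if $K\nsubseteq N$ one picks $k\in K\setminus N$, and the only way (a) at that $k$ can fail to give $I\subseteq(J(R)M:M)$ is $(N:k)=(0:k)$, i.e.\ $Ik=0$; the translation step in the previous paragraph is precisely what handles this degenerate possibility, together with the analogous $aK=0$ case, which is why a direct appeal to (a) does not suffice and the case analysis is needed. Assembling these implications gives the stated equivalence.
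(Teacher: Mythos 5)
Your proof is correct, and your reconstructed list matches the paper's theorem in substance: the actual items are the union form $(N:Rm)\subseteq(J(R)M:M)\cup(0:Rm)$ for $m\in M\setminus N$, the ideal--element condition ($0\neq Im\subseteq N$ implies $I\subseteq(J(R)M:M)$ or $m\in N$), and your condition (c); your (a) is the disjunctive sharpening of the union form (equivalent by the two-subgroup fact you invoke), and your (b) is an extra, also equivalent, condition not in the paper. The genuine difference lies in how the hard step is organized. The paper runs the cycle (1)$\Rightarrow$(2)$\Rightarrow$(3)$\Rightarrow$(4)$\Rightarrow$(1): the passage from a ring element to an ideal is absorbed in (2)$\Rightarrow$(3) (an ideal contained in $(J(R)M:M)\cup(0:Rm)$ but not in $(0:Rm)$ lies in $(J(R)M:M)$), and the passage from a module element to a submodule is a short translate argument in (3)$\Rightarrow$(4): choose $m\in K$ with $Im\neq 0$, so $m\in N$ by (3); any $m'\in K\setminus N$ must satisfy $Im'=0$, and then $0\neq Im=I(m'+m)\subseteq N$ forces $m'+m\in N$, hence $m'\in N$, a contradiction. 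You instead prove (1)$\Rightarrow$(c) directly, translating in both coordinates at once (replacing $a$ by $a+a'$ or $a+a_{1}$, and $k_{0}$ by $k_{0}+k_{1}$), in the spirit of Lemma \ref{l1}; your compressed case analysis does close in every branch (in the case $Ik_{0}=0$, splitting on whether $k_{0}+k_{1}\in N$, which forces $k_{1}\notin N$, and on whether $ak_{1}=0$, each branch reduces to one application of the hypothesis after subtracting an element already known to lie in $(J(R)M:M)$). The paper's factorization through the ideal--element condition buys short, clean individual implications and isolates the two ``avoidance'' mechanisms in separate steps, whereas your direct argument is self-contained and needs no intermediate condition, at the cost of a longer branching argument.
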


\begin{enumerate}
\item $N$ is a weakly $J$-submodule of $M$.

\item For $m\in M\backslash N,$ $(N:Rm)\subseteq(J(R)M:M)\cup(0:Rm)$.

\item Whenever $I$ is an ideal of $R$ and $m\in M$ with $0\neq Im\subseteq N$,
then $I\subseteq(J(R)M:M)$ or $m\in N$.

\item Whenever $I$ is an ideal of $R$ and $K$ is a submodule of $M$ with
$0\neq IK\subseteq N$, then $I\subseteq(J(R)M:M)$ or $K\subseteq N$.
\end{enumerate}

\begin{proof}
(1)$\Rightarrow$(2) Suppose $N$ is a weakly $J$-submodule of $M$ and let $m\in
M\backslash N$. Let $r\in(N:Rm)$ so that $rm\in N$. If $rm=0$, then
$r\in(0:Rm)$. If $rm\neq0$, then $r\in(J(R)M:M)$ since $N$ is a weakly
$J$-submodule and $m\notin N$. Thus, $(N:Rm)\subseteq(J(R)M:M)\cup(0:Rm)$ as needed.

(2)$\Rightarrow$(3) Suppose $I$ is an ideal of $R$ and $m\in M$ with $0\neq
Im\subseteq N$ and $m\notin N$. Then $I\subseteq(N:Rm)$ with $I\nsubseteq
(0:Rm)$. By assumption, we conclude that $I\subseteq(J(R)M:M)$ and we are done.

(3)$\Rightarrow$(4) Assume on the contrary that there are an ideal $I$ of $R$
and a submodule $K$ of $M$ such that $0\neq IK\subseteq N$ but $I\nsubseteq
(J(R)M:M)$ and $K\nsubseteq N$. Then there exists $m\in K$ such that $0\neq
Im\subseteq N$. Since $I\nsubseteq(J(R)M:M)$, then by (3), we get $m\in N$.
Now, choose an element $m^{\prime}\in K\backslash N$ and note that
$Im^{\prime}=0$ since otherwise, if $Im^{\prime}\neq0$, then $m^{\prime}\in
N$, a contradiction. Thus, $0\neq Im=I(m^{\prime}+m)\subseteq N$ and as
$I\nsubseteq(J(R)M:M)$, we have $m^{\prime}+m\in N$. Therefore, again we get
$m^{\prime}\in N$, a contradiction.

(4)$\Rightarrow$(1) Suppose $0\neq rm\in N$ for $r\in R$ and $m\in M$. By
putting $I=Rr$ and $K=Rm$ in (4), the claim is clear.
\end{proof}

Let $M$ be a finitely generated faithful multiplication $R$-module, $N$ be a
proper submodule of $M$ and $I$ be an ideal of $R$. It is known from
\cite{Smith} that $(IN:M)=I(N:M)$ and in particular, $(IM:M)=I$.

\begin{proposition}
\label{IM}Let $M$ be a finitely generated faithful multiplication $R$-module
and $I$ be an ideal of $R$. Then $I$ is a weakly $J$-ideal of $R$ if and only
if $IM$ is a weakly $J$-submodule of $M$.
\end{proposition}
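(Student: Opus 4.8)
The plan is to exploit the standard dictionary between ideals of $R$ and submodules of a finitely generated faithful multiplication module $M$, namely the identities $(IM:M)=I$ and $(IN:M)=I(N:M)$ recalled just before the statement, together with the fact that in such a module $IM\neq 0$ whenever $I\neq 0$ (this is precisely faithfulness plus the multiplication property, since $IM=0$ would force $I\subseteq(0:M)=0$). I would also use that $IM$ is a proper submodule of $M$ if and only if $I$ is a proper ideal of $R$, which again follows from $(IM:M)=I$.

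For the forward direction, assume $I$ is a weakly $J$-ideal of $R$. First note $IM$ is proper by the remark above. To check $IM$ is a weakly $J$-submodule, I would rather verify condition (3) of Theorem~\ref{eq1}: take an ideal $A$ of $R$ and $m\in M$ with $0\neq Am\subseteq IM$, and show $A\subseteq(J(R)M:M)=J(R)$ or $m\in IM$. Write $Rm=BM$ for some ideal $B$ of $R$ (multiplication module). Then $ABM=Am\subseteq IM$, so applying $(-:M)$ and using $(IM:M)=I$ together with $(ABM:M)=AB$ gives $AB\subseteq I$; moreover $ABM=Am\neq 0$, so $AB\neq 0$. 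Now I need to pass from $AB\subseteq I$ with $AB\neq 0$ to the weakly-$J$ conclusion on $I$ — here I would use the ideal-theoretic analogue of Theorem~\ref{eq1} for weakly $J$-ideals (the statement from \cite{Haniece3}, or derive it quickly): $0\neq AB\subseteq I$ forces $A\subseteq J(R)$ or $B\subseteq I$. In the first case we are done; in the second, $Rm=BM\subseteq IM$, i.e. $m\in IM$, also done.

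For the converse, assume $IM$ is a weakly $J$-submodule of $M$; then $IM$ is proper, so $I$ is proper. Let $a,b\in R$ with $0\neq ab\in I$ and $a\notin J(R)$; I want $b\in I$. Multiply by $M$: $abM\subseteq IM$. The subtlety is that $ab\neq 0$ does not immediately give $abM\neq 0$ as elements acting on a single $m$, so I would instead argue with submodules via Theorem~\ref{eq1}(4): put $A=RaRb=Rab$ (an ideal) and $K=M$... but that is too coarse. Better: since $ab\neq 0$ and $M$ is faithful, $abM\neq 0$, so pick $m\in M$ with $abm\neq 0$. Then $0\neq a(bm)\in IM$ and $a\notin J(R)=(J(R)M:M)$, so by the weakly $J$-submodule property $bm\in IM$ for this particular $m$ — but I need $b\in I$, i.e. $bM\subseteq IM$. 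To get this uniformly I would instead take $A=Ra$, $K=bM$: then $AK=abM\subseteq IM$ and $AK=abM\neq 0$, and $A=Ra\not\subseteq J(R)$ (since $a\notin J(R)$), so Theorem~\ref{eq1}(4) yields $bM=K\subseteq IM$, hence $b=(bM:M)\subseteq(IM:M)=I$ — wait, $(bM:M)=Rb$ only after taking ideal-closure, but indeed $(bM:M)=Rb$ since $(bM:M)=b(M:M)=bR$, giving $Rb\subseteq I$ and so $b\in I$.

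The main obstacle I anticipate is the bookkeeping around nonzeroness: throughout, one must keep the hypothesis "$\neq 0$" genuinely alive when translating between the ring side and the module side, and this is exactly where faithfulness of $M$ is essential (to guarantee $IM=0\iff I=0$ and $abM=0\iff ab=0$). The other point requiring a little care is that to use Theorem~\ref{eq1} on the module side and its ideal-theoretic counterpart on the ring side I should either cite the corresponding characterization of weakly $J$-ideals from \cite{Haniece3} or re-derive the one implication I need; everything else is a routine application of $(IN:M)=I(N:M)$ and $(IM:M)=I$.
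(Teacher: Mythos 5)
Your proof is correct and follows essentially the same route as the paper: both directions rest on the dictionary $(IM:M)=I$, $(IN:M)=I(N:M)$ for finitely generated faithful multiplication modules, the characterizations in Theorem~\ref{eq1}, and the ideal-level analogue of that theorem for weakly $J$-ideals (cited from \cite{Haniece3} or recoverable as the case $M=R$). The only cosmetic difference is that in the forward direction you verify condition (3) of Theorem~\ref{eq1} via a presentation ideal $B$ with $Rm=BM$, whereas the paper works directly with $0\neq r(\langle m\rangle:M)\subseteq I$; the substance is identical.
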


\begin{proof}
Let $I$ be a weakly $J$-ideal of $R$. If $IM=M$, then $I=(IM:M)=R$, a
contradiction. Thus, $IM$ is proper in $M$. Now, let $r\in R$ and $m\in M$
such that $0\neq rm\in IM$ and $r\notin(J(R)M:M)=J(R)$. If $((rm):M)=0$, then
$rm=((rm):M)M=0$, a contradiction. Thus, $0\neq r((m):M)=((rm):M)\subseteq
(IM:M)=I$. As $I$ is a weakly $J$-ideal of $R$, we conclude that
$((m):M)\subseteq I$ and so $m\in((m):M)M\subseteq IM$. Conversely, suppose
$IM$ is a weakly $J$-submodule of $M$ and let $a,b\in R$ such that $0\neq
ab\in I$ with $a\notin J(R)$. Then $0\neq abM\subseteq IM$ (if $abM=0$, then
$\left\langle ab\right\rangle =(\left\langle ab\right\rangle M:M)=0$, a
contradiction). Since also $a\notin(J(R)M:M)$, then $bM\subseteq IM$. It
follows that $b\in(bM:M)\subseteq(IM:M)=I$ as needed.
\end{proof}

As $N=(N:M)M$ for any submodule $N$ of a multiplication $R$-module $M$, we
have the following corollary of Proposition \ref{IM}.

\begin{corollary}
\label{(N:M)}Let $M$ be a finitely generated faithful multiplication
$R$-module and $N$ be a submodule of $M$. The following are equivalent:

\begin{enumerate}
\item $N$ is a weakly $J$-submodule of $M$.

\item $(N:M)$ is weakly $J$-ideal of $R$.

\item $N=IM$ for some weakly $J$-ideal $I$ of $R$.
\end{enumerate}
\end{corollary}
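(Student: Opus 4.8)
The plan is to derive this corollary directly from Proposition \ref{IM} together with the standard fact, recalled just before that proposition, that for a finitely generated faithful multiplication $R$-module $M$ one has $(IM:M)=I$ for every ideal $I$, and that $N=(N:M)M$ for every submodule $N$ of a multiplication module. These two identities say precisely that the assignments $I\mapsto IM$ and $N\mapsto(N:M)$ are mutually inverse bijections between ideals of $R$ and submodules of $M$, so the three conditions are really just three ways of describing the same object through this correspondence.

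First I would show (1)$\Rightarrow$(2): if $N$ is a weakly $J$-submodule, write $N=(N:M)M$ using the multiplication hypothesis, so $N=IM$ with $I=(N:M)$; by Proposition \ref{IM} (the ``only if'' direction read contrapositively, i.e.\ the ``if'' direction), since $IM=N$ is a weakly $J$-submodule, $I=(N:M)$ is a weakly $J$-ideal of $R$. Next, (2)$\Rightarrow$(3) is immediate: taking $I=(N:M)$, which is a weakly $J$-ideal by hypothesis, we have $N=(N:M)M=IM$, exhibiting $N$ in the required form. Finally, (3)$\Rightarrow$(1): if $N=IM$ for some weakly $J$-ideal $I$, then Proposition \ref{IM} gives directly that $N=IM$ is a weakly $J$-submodule of $M$. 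This closes the cycle.

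The only point requiring any care — and the mildest possible ``obstacle'' — is making sure the proper-ness hypotheses line up: a weakly $J$-ideal and a weakly $J$-submodule are by definition proper, and one must check that passing back and forth preserves this. But this is already handled inside Proposition \ref{IM} (its proof shows $IM$ is proper when $I$ is proper, via $I=(IM:M)$, and conversely), so nothing new is needed here. I would write the argument compactly as the implication cycle $(1)\Rightarrow(2)\Rightarrow(3)\Rightarrow(1)$, each step a one-line invocation of Proposition \ref{IM} and the identities $(IM:M)=I$, $N=(N:M)M$.

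\begin{proof}
$(1)\Rightarrow(2)$: Since $M$ is a multiplication module, $N=(N:M)M$. As $N$ is a weakly $J$-submodule of $M$, Proposition \ref{IM} (applied to the ideal $I=(N:M)$, for which $IM=N$) yields that $(N:M)$ is a weakly $J$-ideal of $R$.

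$(2)\Rightarrow(3)$: Put $I=(N:M)$. By hypothesis $I$ is a weakly $J$-ideal of $R$, and since $M$ is multiplication, $N=(N:M)M=IM$.

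$(3)\Rightarrow(1)$: If $N=IM$ for some weakly $J$-ideal $I$ of $R$, then by Proposition \ref{IM}, $N=IM$ is a weakly $J$-submodule of $M$.
\end{proof}
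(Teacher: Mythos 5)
Your proof is correct and follows essentially the same route as the paper: the corollary is obtained directly from Proposition \ref{IM} together with the identities $N=(N:M)M$ and $(IM:M)=I$ for a finitely generated faithful multiplication module. Nothing is missing; your handling of properness matches what is already implicit in Proposition \ref{IM}.
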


\begin{lemma}
\label{l2}\cite{Haniece3} If $I$ is a weakly $J$-ideal of a ring $R$, then
$I\subseteq J(R).$
\end{lemma}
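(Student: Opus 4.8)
The plan is a short proof by contradiction, completely analogous to the corresponding statement for $J$-ideals established in \cite{Hani}. Suppose, for contradiction, that $I$ is a weakly $J$-ideal of $R$ with $I \nsubseteq J(R)$, and fix an element $x \in I$ with $x \notin J(R)$. Since $J(R)$ is an ideal of $R$, it contains $0$; hence $x \neq 0$. Now apply the defining property of a weakly $J$-ideal to the pair $a = x$, $b = 1$: indeed $ab = x \cdot 1 = x$, so $0 \neq ab \in I$, and $a = x \notin J(R)$ by choice. The definition therefore forces $b = 1 \in I$, i.e. $I = R$, which contradicts the hypothesis that $I$ is a proper ideal. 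Hence no such $x$ exists and $I \subseteq J(R)$.

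The single point that merits attention — and the reason this is no harder than the non-weak case — is the remark that $0 \in J(R)$, so every element lying outside $J(R)$ is automatically nonzero. This is precisely what allows the extra hypothesis ``$0 \neq ab$'' in the weak definition to be satisfied for free when we specialize to $b = 1$, and it is why the weak condition, tested at $b = 1$, still collapses to the conclusion $1 \in I$ unless $I$ is already contained in $J(R)$. Equivalently, one may argue directly: for $x \in I$, either $x = 0 \in J(R)$, or $x \neq 0$ and $x \notin J(R)$ would yield $1 \in I$; in all cases $x \in J(R)$.

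I do not anticipate any genuine obstacle here: the argument uses nothing beyond the definition of a weakly $J$-ideal and the elementary fact that $0$ belongs to every ideal, in particular to $J(R)$; no maximal ideals, units, or finiteness hypotheses are needed. Since the lemma is quoted from \cite{Haniece3}, this short proof merely records, for the reader's convenience, why the containment holds.
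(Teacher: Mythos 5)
Your argument is correct: since $0\in J(R)$, any $x\in I\setminus J(R)$ is nonzero, and applying the defining property of a weakly $J$-ideal to $a=x$, $b=1$ (so that $0\neq ab=x\in I$ and $a\notin J(R)$) forces $1\in I$, contradicting properness. The paper itself gives no proof of this lemma --- it is quoted from \cite{Haniece3} --- and your short argument is exactly the natural one, so there is nothing to correct or compare.
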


If $M$ is a multiplication $R$-module and $N=IM$, $K=JM$ are two submodules of
$M$, then the product $NK$ of $N$ and $K$ is defined as $NK=(IM)(JM)=(IJ)M$.
In particular, if $m_{1},m_{2}\in M$, then $m_{1}m_{2}=\left\langle
m_{1}\right\rangle \left\langle m_{2}\right\rangle $. Recall from \cite{Lee}
that an $R$-module $M$ is said to be reduced if whenever $a\in R$, $m\in M$
with $a^{2}m=0$ implies $am=0.$

Next, we show that the condition $N\subseteq J(R)M$ in Lemma \ref{l1} can be
omitted if $M$ is a finitely generated faithful multiplication $R$-module.

\begin{corollary}
\label{reduced}Let $N$ be a weakly $J$-submodule of a finitely generated
faithful multiplication $R$-module $M$ that is not a $J$-submodule. Then
$(N:M)N=0$. In particular, we have $N^{2}=0$. Additionally, if $M$ is reduced,
then $N=0$.
\end{corollary}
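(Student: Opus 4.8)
The plan is to reduce everything to Lemma \ref{l1} by first showing that a weakly $J$-submodule $N$ of a finitely generated faithful multiplication module $M$ automatically satisfies $N\subseteq J(R)M$. Indeed, by Corollary \ref{(N:M)}, if $N$ is a weakly $J$-submodule of $M$, then $(N:M)$ is a weakly $J$-ideal of $R$, and by Lemma \ref{l2} this forces $(N:M)\subseteq J(R)$. Since $M$ is multiplication we have $N=(N:M)M\subseteq J(R)M$, which is precisely the hypothesis needed to invoke Lemma \ref{l1}. Hence if $N$ is moreover not a $J$-submodule, Lemma \ref{l1} yields $(N:M)N=0$.

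Next I would upgrade $(N:M)N=0$ to $N^{2}=0$. Writing $N=(N:M)M$ (multiplication module), the product $N^{2}$ in the sense defined just before the corollary is $N^{2}=\bigl((N:M)M\bigr)\bigl((N:M)M\bigr)=(N:M)^{2}M$. On the other hand $(N:M)N=(N:M)(N:M)M=(N:M)^{2}M$ as well, so $N^{2}=(N:M)N=0$ directly. (Alternatively, one can use that $N^{2}=(N:M)N$ always holds in a multiplication module and quote the "In particular" clause of Lemma \ref{l1}, but the direct identification is cleanest.)

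Finally, for the reduced case: suppose $M$ is reduced and $N^{2}=0$. I want to conclude $N=0$. Take any $n\in N$; then $\langle n\rangle\langle n\rangle\subseteq N^{2}=0$, and in a multiplication module $\langle n\rangle\langle n\rangle$ is generated by products of the form $(a m)(b m)$ type expressions — more precisely, writing $\langle n\rangle=Jm'$-style presentation ideals, one gets that the presentation ideal squared annihilates $M$. The quickest route: from $N^{2}=0$ and $N=(N:M)M$ we get $(N:M)^{2}M=0$, so for each $m\in M$ and $a\in(N:M)$, $a^{2}m=0$, whence by reducedness $am=0$; thus $(N:M)M=0$, i.e. $N=0$. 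The main obstacle — really the only place one must be careful — is matching the definition of the module product $N^{2}$ with the ideal-theoretic computation $(N:M)^{2}M$ and making sure the "reduced" hypothesis is applied to the right element; once $(N:M)^{2}M=0$ is in hand, reducedness applied elementwise finishes it immediately, since faithfulness of $M$ is not even needed for this last step (it was already used to get $N\subseteq J(R)M$).
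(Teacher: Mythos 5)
Your proposal is correct and follows essentially the same route as the paper: invoke Corollary \ref{(N:M)} and Lemma \ref{l2} to get $(N:M)\subseteq J(R)$, hence $N=(N:M)M\subseteq J(R)M$, apply Lemma \ref{l1} for $(N:M)N=0$, identify $N^{2}=(N:M)^{2}M=(N:M)N=0$, and use reducedness elementwise ($a^{2}m=0\Rightarrow am=0$ for $a\in(N:M)$) to conclude $N=(N:M)M=0$. The only difference is cosmetic: you spell out the inclusion $N\subseteq J(R)M$ that the paper leaves implicit.
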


\begin{proof}
From Corollary \ref{(N:M)}, Lemma \ref{l1} and Lemma \ref{l2}, we conclude
$(N:M)N=0$. Moreover, $N^{2}=(N:M)^{2}M=(N:M)(N:M)M=(N:M)N=0$. Now, suppose
that $M$ is reduced and let $a\in(N:M)$. Then for all $m\in\dot{M}$,
$a^{2}m\in(N:M)N=0$ which implies $am=0$. Thus $N=(N:M)M=0$ which completes
the proof.
\end{proof}

It follows by Corollary \ref{reduced} that every non-zero weakly $J$-submodule
of a finitely generated faithful reduced multiplication module is a $J$-submodule.

We recall that a submodule $N$ of an $R$-module $M$ is called pure if
$IN=N\cap IM$ for every ideal $I$ of $R$.

\begin{proposition}
Let $M$ be a finitely generated faithful multiplication $R$-module and $N$ be
a pure submodule of $M$. If $I$ is a weakly $J$-ideal of $R$ and $N$ is a
weakly $J$-submodule of $M$, then $IN$ is a weakly $J$-submodule of $M$.
\end{proposition}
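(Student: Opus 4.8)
The plan is to reduce the whole statement to Corollary \ref{(N:M)}: since $M$ is a finitely generated faithful multiplication $R$-module, $IN$ will be a weakly $J$-submodule of $M$ as soon as I show that $(IN:M)$ is a weakly $J$-ideal of $R$ (this also forces $IN$ to be proper, since a weakly $J$-ideal is proper and $(IN:M)$ proper is equivalent to $IN\neq M$). The first step is therefore to identify $(IN:M)$. Using the purity of $N$, namely $IN=N\cap IM$, together with the elementary identity $(N\cap IM:M)=(N:M)\cap(IM:M)$ and the fact recalled just before Proposition \ref{IM} that $(IM:M)=I$ in a finitely generated faithful multiplication module, I get
\[
(IN:M)=(N\cap IM:M)=(N:M)\cap(IM:M)=(N:M)\cap I .
\]

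Next I would note that both ideals appearing here are weakly $J$-ideals of $R$: the ideal $I$ by hypothesis, and $(N:M)$ by Corollary \ref{(N:M)} applied to the weakly $J$-submodule $N$. Then I would check the simple closure fact that the intersection of two weakly $J$-ideals $I_1,I_2$ is again a weakly $J$-ideal: if $0\neq ab\in I_1\cap I_2$ with $a\notin J(R)$, then applying the weakly $J$-ideal property to $I_1$ and to $I_2$ separately yields $b\in I_1$ and $b\in I_2$, hence $b\in I_1\cap I_2$; and $I_1\cap I_2$ is proper because $I_1$ is. Applying this with $I_1=I$ and $I_2=(N:M)$ shows that $(IN:M)=I\cap(N:M)$ is a weakly $J$-ideal of $R$, and Corollary \ref{(N:M)} then gives that $IN$ is a weakly $J$-submodule of $M$.

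The only genuinely delicate point is the very first reduction. If one instead tried to use the identity $(IN:M)=I(N:M)$ from \cite{Smith}, one would be left having to prove that the \emph{product} of two weakly $J$-ideals is a weakly $J$-ideal, which has no reason to be true in general (subideals of weakly $J$-ideals need not be weakly $J$-ideals). Exploiting purity to rewrite $IN$ as the intersection $N\cap IM$, and hence $(IN:M)$ as $I\cap(N:M)$, is exactly what makes the argument work, since closure of the class of weakly $J$-ideals under finite intersections is immediate. So I expect the main ``idea'' of the proof to be recognizing that purity converts the product into an intersection; everything else is routine bookkeeping with colon ideals in a finitely generated faithful multiplication module.
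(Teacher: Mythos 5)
Your proof is correct, and it is organized differently from the paper's. The paper verifies the definition directly on elements: for $0\neq rm\in IN$ with $r\notin J(R)$ it passes to the ideal $((m):M)$, uses $(IN:M)=I(N:M)\subseteq I\cap(N:M)$ together with the weakly $J$-ideal property of $I$ and of $(N:M)$ to get $((m):M)\subseteq I\cap(N:M)$, deduces $m\in N$ and $m\in IM$, and only at the last step invokes purity to conclude $m\in N\cap IM=IN$. You instead use purity at the outset to compute $(IN:M)=(N\cap IM:M)=(N:M)\cap(IM:M)=(N:M)\cap I$ exactly, observe that the class of weakly $J$-ideals is closed under intersection (your verification of this is fine), and then apply the equivalence (2)$\Rightarrow$(1) of Corollary \ref{(N:M)}, with properness of $IN$ coming for free from properness of $(IN:M)$. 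This reduction to the ideal level avoids any element chase and is arguably cleaner; what it costs is nothing beyond the small closure lemma, which parallels Proposition \ref{int}. One small caveat about your closing remark: the identity $(IN:M)=I(N:M)$ from \cite{Smith} is not a dead end — the paper uses it, but only through the containment $I(N:M)\subseteq I\cap(N:M)$, so no claim about products of weakly $J$-ideals is ever needed; your exact equality $(IN:M)=I\cap(N:M)$ via purity is simply a sharper starting point.
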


\begin{proof}
Let $r\in R$ and $m\in M$ such that $0\neq rm\in IN$ and $r\notin
(J(R)M:M)=J(R)$. Then clearly $0\neq r((m):M)=((rm):M)\in
(IN:M)=I(N:M)\subseteq I\cap(N:M)$. Since $(N:M)$ is a weakly $J$-ideal by
Corollary \ref{(N:M)}, and $I$ is also a weakly $J$-ideal, then
$((m):M)\subseteq I\cap(N:M)$. As $((m):M)\subseteq(N:M)$, we conclude that
$m\in((m):M)M\subseteq(N:M)M=N$. Moreover, $((m):M)\subseteq I$ implies that
$m\in IM$. Thus, $m\in IM\cap N=IN$ since $N$ is pure in $M$.
\end{proof}

\begin{lemma}
\cite{Majed}\label{Majed} Let $N$ is a submodule of a faithful multiplication
$R$-module $M$. If $I$ is a finitely generated faithful multiplication ideal
of $R$, then

\begin{enumerate}
\item $N=(IN:_{M}I)$.

\item If $N\subseteq IM$, then $(JN:_{M}I)=J(N:_{M}I)$ for any ideal $J$ of
$R$.
\end{enumerate}
\end{lemma}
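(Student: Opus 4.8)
The plan is to prove both parts by localizing at maximal ideals, where the finitely generated faithful multiplication ideal $I$ becomes principal, generated by a non-zero-divisor. First I would set up the local picture. Since $I$ is finitely generated, colons by $I$ commute with localization: for any submodule $K$ of $M$ and any maximal ideal $\mathfrak{m}$ one has $(K:_{M}I)_{\mathfrak{m}}=(K_{\mathfrak{m}}:_{M_{\mathfrak{m}}}I_{\mathfrak{m}})$ (write $I=Ra_{1}+\cdots+Ra_{n}$ and intersect the colons $(K:_{M}a_{i})$, each of which localizes). Next, $I_{\mathfrak{m}}$ is again finitely generated, multiplication, and faithful over $R_{\mathfrak{m}}$: if $\tfrac{r}{1}I_{\mathfrak{m}}=0$, then $s_{i}ra_{i}=0$ in $R$ for suitable $s_{i}\notin\mathfrak{m}$, so $(s_{1}\cdots s_{n})r\in Ann_{R}(I)=0$ and $\tfrac{r}{1}=0$. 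A nonzero multiplication module over a local ring is cyclic (\cite{Bast}), so $I_{\mathfrak{m}}=aR_{\mathfrak{m}}$ for some $a$ with $Ann_{R_{\mathfrak{m}}}(a)=0$. Similarly, since $M$ is faithful multiplication, $M_{\mathfrak{m}}$ is nonzero, cyclic and faithful over $R_{\mathfrak{m}}$: the local description of multiplication modules (\cite{Bast}) gives $m_{0}\in M$ and $s\notin\mathfrak{m}$ with $sM\subseteq Rm_{0}$, and if $\tfrac{r}{1}$ annihilates $M_{\mathfrak{m}}$ then $urm_{0}=0$ for some $u\notin\mathfrak{m}$, whence $ursM=0$, $urs\in Ann_{R}(M)=0$, and $\tfrac{r}{1}=0$. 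Consequently $a$ acts as a non-zero-divisor on $M_{\mathfrak{m}}$.

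Granting this, part (1) is short. The inclusion $N\subseteq(IN:_{M}I)$ is immediate. For the converse, suppose $Im\subseteq IN$; localizing gives $a\tfrac{m}{1}\in aN_{\mathfrak{m}}$, so $a\big(\tfrac{m}{1}-n'\big)=0$ for some $n'\in N_{\mathfrak{m}}$, and cancelling $a$ yields $\tfrac{m}{1}=n'\in N_{\mathfrak{m}}$; as this holds at every $\mathfrak{m}$, we get $m\in N$. For part (2), the inclusion $J(N:_{M}I)\subseteq(JN:_{M}I)$ is a direct check. For the reverse, the hypothesis $N\subseteq IM$ localizes to $N_{\mathfrak{m}}\subseteq aM_{\mathfrak{m}}$; since multiplication by $a$ is injective on $M_{\mathfrak{m}}$ with image containing $N_{\mathfrak{m}}$, we obtain $a(N:_{M}I)_{\mathfrak{m}}=N_{\mathfrak{m}}$, hence $aJ_{\mathfrak{m}}(N:_{M}I)_{\mathfrak{m}}=J_{\mathfrak{m}}N_{\mathfrak{m}}$. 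If now $Im\subseteq JN$, then $a\tfrac{m}{1}\in J_{\mathfrak{m}}N_{\mathfrak{m}}=aJ_{\mathfrak{m}}(N:_{M}I)_{\mathfrak{m}}$; cancelling $a$ puts $\tfrac{m}{1}$ in $\big(J(N:_{M}I)\big)_{\mathfrak{m}}$, and running over all $\mathfrak{m}$ gives $m\in J(N:_{M}I)$.

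The easy inclusions and the bookkeeping of localizing colons are routine; the real content, and the step I expect to be the main obstacle, is the reduction to the local case, in particular verifying that the generator of $I_{\mathfrak{m}}$ is a non-zero-divisor not merely on $R_{\mathfrak{m}}$ but on $M_{\mathfrak{m}}$. This is exactly where faithfulness of $M$ enters, together with the multiplication hypothesis, which is what forces faithfulness to be inherited by every localization; without it both cancellations break down. As an alternative to localization one may argue inside $M$: writing $Rm=(m:M)M$ and $N=(N:M)M$, the relation $Im\subseteq IN$ becomes $I(m:M)M\subseteq I(N:M)M$, so $I(m:M)\subseteq I(N:M)$ (faithful multiplication modules reflect ideal inclusions, itself a localization fact), and since $I$ is a cancellation ideal, $(m:M)\subseteq(N:M)$, i.e. $m\in N$; part (2) then follows after noting $(IM:M)=I$ and $(N:M)=I\big((N:M):I\big)$.
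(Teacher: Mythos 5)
The paper gives no proof of this lemma at all: it is quoted from M.~M.~Ali's paper on residual submodules of multiplication modules \cite{Majed}, so there is no internal argument to compare against. Your localization proof is an independent and essentially sound route: reducing to $R_{\mathfrak m}$, where the finitely generated faithful multiplication ideal $I$ becomes $aR_{\mathfrak m}$ with $a$ of zero annihilator, and cancelling $a$ on a cyclic faithful $M_{\mathfrak m}$, does yield both (1) and (2); your closing sketch via cancellation ideals is in fact closer in spirit to Ali's original method, which works with $(N:M)$ and the cancellation property of $I$ rather than with explicit localizations.

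One assertion needs repair. The claim ``since $M$ is faithful multiplication, $M_{\mathfrak m}$ is nonzero, cyclic and faithful'' is not true in general: the El-Bast--Smith local description \cite{Bast} is a dichotomy --- either $M=T_{\mathfrak m}(M)$, i.e. $M_{\mathfrak m}=0$, or there exist $m_{0}\in M$ and $s\notin\mathfrak m$ with $sM\subseteq Rm_{0}$ --- and faithfulness does not rule out the first branch when $M$ is not finitely generated. For instance, $R=\prod_{i\in\mathbb N}\mathbb F_{2}$ and $M=\bigoplus_{i\in\mathbb N}\mathbb F_{2}$ give a faithful multiplication $R$-module with $M_{\mathfrak m}=0$ at every maximal ideal containing $M$. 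The fix is immediate: when $M_{\mathfrak m}=0$ all the localized modules occurring in (1) and (2) vanish, so the equalities hold trivially at $\mathfrak m$, and on the other branch your computation (that $M_{\mathfrak m}$ is cyclic and faithful, hence $a$ acts as a non-zero-divisor on it) is correct, so the argument goes through after this case split. A smaller caveat: in the alternative sketch at the end you invoke $(IM:M)=I$ and ``faithful multiplication modules reflect ideal inclusions,'' which as stated require $M$ finitely generated, whereas the lemma assumes only that $M$ is faithful multiplication; so the localization proof, suitably patched, is the one to keep.
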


\begin{proposition}
Let $I$ be a finitely generated faithful multiplication ideal of a ring $R$
and $N$ be a submodule of a faithful multiplication $R$-module $M$. If $IN$ is
a weakly $J$-submodule of $M$, then either $I$ is a weakly $J$-ideal of $R$ or
$N$ is a weakly $J$-submodule of $M$.
\end{proposition}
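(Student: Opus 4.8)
The plan is to suppose $IN$ is a weakly $J$-submodule but that neither $I$ is a weakly $J$-ideal of $R$ nor $N$ is a weakly $J$-submodule of $M$, and derive a contradiction. From the failure of $I$ being a weakly $J$-ideal we obtain $a, b \in R$ with $0 \neq ab \in I$, $a \notin J(R)$, and $b \notin I$. From the failure of $N$ being a weakly $J$-submodule we obtain $r \in R$ and $m \in M$ with $0 \neq rm \in N$, $r \notin (J(R)M:M)$, and $m \notin N$. The idea is to combine these two ``witnesses'' into a single product that lands in $IN$ but violates the weakly $J$-submodule property of $IN$. The natural candidate is to look at $(ar)$ acting on $(bm)$, or more precisely to produce an element of the form (ideal element)(module element) inside $IN$.

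First I would observe, using that $M$ is faithful multiplication and $I$ is a finitely generated faithful multiplication ideal, that $(IN:_M I) = N$ by Lemma \ref{Majed}(1); this is the bridge that lets us pull information about $IN$ back to $N$. Then I would consider the element $abm$: since $ab \in I$ and $rm \in N$ we have $ab\cdot rm \in IN$, but it is cleaner to work with $abm$ directly. Note $abm \in IN$ is not immediate since $m \notin N$; instead one argues via the submodule $bN$ or the ideal $aI$. Concretely, $aI \subseteq R$ with $aI \cdot N \subseteq IN$ (indeed $\subseteq I N$ since $a I \subseteq I$), so if $0 \neq aI\cdot N \subseteq IN$ and $aI \nsubseteq (J(R)M:M) = J(R)$ (which holds because $a \notin J(R)$ and, by Lemma \ref{l2}, a weakly $J$-ideal sits inside $J(R)$ — so $aI$ escaping $J(R)$ needs care), then by the ideal-submodule characterization in Theorem \ref{eq1}(4) applied to $IN$, we would get $N \subseteq IN$, forcing $N = IN$ via $(IN:_M I) = N$, and then $I$-related or $N$-related facts collapse. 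The bookkeeping of which products vanish (the ``$0 \neq$'' hypotheses) is where the argument splits into cases analogous to Cases 1--3 of Lemma \ref{l1}: one must handle $aIN = 0$, the relevant annihilator conditions, and use the trick of replacing $m$ by $m + n_0$ or $b$ by $b + a_0$ for suitable elements to avoid the zero product.

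The main obstacle I anticipate is precisely the interplay of the three simultaneous ``nonzero'' constraints: $0 \neq ab \in I$, $0 \neq rm \in N$, and needing some combined product to be nonzero in $IN$. If the naive combined product happens to be zero, one must perturb using Lemma \ref{l1}-style case analysis — adding in an element $n_1 \in N$ with $b n_1 \neq 0$ or $a n_1 \neq 0$, etc. — and this requires knowing, e.g., that $(N:M)N \neq 0$ or $I N \neq 0$, which may themselves need separate sub-arguments (possibly invoking Corollary \ref{reduced} to dispose of the degenerate case where everything is zero, since then $N = 0$ is trivially weakly $J$). So I would structure the proof as: (i) reduce to $IN \neq 0$ and $N \neq 0$; (ii) extract the two witnesses; (iii) form $aI$ and $N$, check the hypotheses of Theorem \ref{eq1}(4) for $IN$, handling the zero-product case by the perturbation trick; (iv) conclude $N \subseteq IN$, hence $N = IN$ by Lemma \ref{Majed}(1); (v) feed this back to show $I$ is a weakly $J$-ideal after all (using $(IN:M) = (N:M) = I(N:M)$ type identities together with Corollary \ref{(N:M)}), contradicting our assumption.
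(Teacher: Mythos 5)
Your proposal has genuine gaps, and it also diverges from the actual structure of the statement. The pivotal step --- applying Theorem \ref{eq1}(4) to the weakly $J$-submodule $IN$ with the pair $(aI,\,N)$ --- needs both $aIN\neq 0$ and, to land in the useful branch, $aI\nsubseteq (J(R)M:M)$. Neither is available: $a\notin J(R)$ does not give $aI\nsubseteq J(R)$ (whenever $I\subseteq J(R)$, which is perfectly possible for a faithful finitely generated multiplication ideal, e.g. $I=J(R)$ in a discrete valuation ring, one has $aI\subseteq J(R)$ for every $a$); you flag this yourself (``needs care'') but never resolve it, and if the branch $aI\subseteq (J(R)M:M)$ occurs you derive no contradiction. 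The nonvanishing $aIN\neq 0$ is likewise never established. Even in the favourable branch, step (v) is unsubstantiated: from $N=IN$ what follows, via $(N:M)=(IN:M)$ and Corollary \ref{(N:M)}, is that $N$ is a weakly $J$-submodule when it is proper --- not that $I$ is a weakly $J$-ideal, which is what you claim to extract, and no argument for that is given. Finally, your witness extraction in step (ii) silently assumes $N$ is proper: if $N=M$, the failure of $N$ to be a weakly $J$-submodule is purely the failure of properness and yields no pair $(r,m)$ at all; yet this is exactly the case in which the conclusion must be that $I$ is a weakly $J$-ideal.

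The underlying point is that the disjunction in the statement is not a genuine ``mixing'' of two witnesses; which disjunct holds is decided by whether $N=M$. The paper argues directly: if $N=M$, then $I=I(N:M)=(IN:M)$ is a weakly $J$-ideal by Corollary \ref{(N:M)}; if $N\varsubsetneq M$, then Lemma \ref{Majed} gives $N=(IN:_{M}I)$, hence $(N:M)=(I(N:M):I)$, and for $0\neq ab\in (N:M)$ with $a\notin J(R)$ the faithfulness of $I$ yields $0\neq Iab\subseteq I(N:M)=(IN:M)$; since $(IN:M)$ is a weakly $J$-ideal this forces $Ib\subseteq I(N:M)$, and Lemma \ref{Majed} then gives $b\in (N:M)$, so $(N:M)$ is a weakly $J$-ideal and $N$ is a weakly $J$-submodule by Corollary \ref{(N:M)}. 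Recasting your attempt at the level of the ideals $(IN:M)$, $I(N:M)$ and the residual $(I(N:M):I)$, rather than at the level of module elements, is what eliminates all of the nonvanishing and perturbation difficulties you anticipated.
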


\begin{proof}
If $N=M$, then $I=I(N:M)=(IN:M)$ is a weakly $J$-ideal of $R$ by Corollary
\ref{(N:M)}. Suppose $N\varsubsetneq M$. By Lemma \ref{Majed}, we have
$N=(IN:_{M}I)$ and so it can be easily verified that $(N:M)=((IN:_{M}%
I):M)=(I(N:M):I)$. Let $a,b\in R$ such that $0\neq ab\in(N:M)$ and $a\notin
J(R)$. Since $I$ is faithful, then $0\neq Iab\subseteq I(N:M)=(IN:M)$. Thus,
$Ib\subseteq I(N:M)$ as $(IN:M)$ is a weakly $J$-ideal. It follows again by
Lemma \ref{Majed} that $b\in(I(N:M):I)=(N:M)$ and so $(N:M)$ is a weakly
$J$-ideal of $R$. Therefore, $N$ is a weakly $J$-submodule of $M$ by Corollary
\ref{(N:M)}.
\end{proof}

Let $N$ be a submodule of an $R$-module $M$ and $I$ be an ideal of $R$. The
residual of $N$ by $I$ is the set $(N:_{M}I)=\{m\in M:Im\subseteq N\}$. It is
clear that $(N:_{M}I)$ is a submodule of $M$ containing $N$. More generally,
for any subset $S\subseteq R$, $(N:_{M}S)$ is a submodule of $M$ containing
$N$.

\begin{proposition}
Let $M$ be a finitely generated faithful multiplication $R$-module and $I$ be
an ideal of $R$. Then

\begin{enumerate}
\item If $N$ is a weakly $J$-submodule of $M$ and $(N:_{M}I)\neq M$, then
$(N:_{M}I)$ is also a weakly $J$-submodule of $M$.

\item If $I$ is finitely generated faithful multiplication and $(N:_{M}I)\neq
M$, then $N$ is a weakly $J$-submodule of $IM$ if and only if $(N:_{M}I)$ is a
weakly $J$-submodule of $M$.
\end{enumerate}
\end{proposition}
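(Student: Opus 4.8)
The plan is to prove both parts by passing to presentation ideals and invoking Corollary \ref{(N:M)}, reducing statements about weakly $J$-submodules to statements about weakly $J$-ideals that have already been established in the excerpt. For part (1), I would start from a weakly $J$-submodule $N$ of $M$ with $(N:_MI)\neq M$, and compute the presentation ideal of $(N:_MI)$. Since $M$ is a finitely generated faithful multiplication module, the map $K\mapsto(K:M)$ is a lattice isomorphism, and one checks that $((N:_MI):M)=((N:M):I)$, the residual of the ideal $(N:M)$ by $I$ in $R$. By Corollary \ref{(N:M)}, $(N:M)$ is a weakly $J$-ideal of $R$, so it suffices to know that the ideal residual $((N:M):I)$ of a weakly $J$-ideal is again a weakly $J$-ideal whenever it is proper; this is a routine verification analogous to part (1) itself but one dimension down (if $0\neq ab\in((N:M):I)$ and $a\notin J(R)$, pick $c\in I$ with $abc\neq0$, so $0\neq (ac)b\in(N:M)$ wait — better: $0\neq abI\subseteq(N:M)$, and using that $(N:M)$ is weakly $J$ one deduces $bI\subseteq(N:M)$, i.e. $b\in((N:M):I)$, handling the possible zero-divisor obstruction by the same trick as in Lemma \ref{l1}). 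Then $(N:_MI)$ is a weakly $J$-submodule of $M$ again by Corollary \ref{(N:M)}.

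For part (2), $IM$ is itself a finitely generated faithful multiplication $R$-module when $I$ is a finitely generated faithful multiplication ideal (being the product of two such, or at least enough of these hypotheses to apply Lemma \ref{Majed}). By Lemma \ref{Majed}(1), $N=(IN:_MI)$, and one verifies $(N:_{IM}I)$ — the presentation data of $N$ inside $IM$ — corresponds under the isomorphism for $IM$ to the same ideal-theoretic object. The cleanest route is: $N$ is a weakly $J$-submodule of $IM$ iff $(N:IM)$ is a weakly $J$-ideal of $R$ (Corollary \ref{(N:M)} applied to the module $IM$), while $(N:_MI)$ is a weakly $J$-submodule of $M$ iff $((N:_MI):M)$ is a weakly $J$-ideal of $R$. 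So everything comes down to showing $(N:IM)=((N:_MI):M)$ as ideals of $R$. Using $(N:_MI)=\{m\in M:Im\subseteq N\}$ together with the faithful multiplication structure and Lemma \ref{Majed}(2), I expect $((N:_MI):M)=((N:M):I)$ and separately $(N:IM)=(N:M(I))=((N:M):I)$ via $N\subseteq IM$ and the identity $(N:IM)=(N:M):(IM:M)=((N:M):I)$; matching these gives the equivalence.

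The main obstacle will be the bookkeeping with presentation ideals: making sure that $IM$ really satisfies the hypotheses needed to apply Corollary \ref{(N:M)} (finitely generated, faithful, multiplication), and carefully justifying the two residual identities $((N:_MI):M)=((N:M):I)$ and $(N:IM)=((N:M):I)$ — the second requires $N\subseteq IM$, which is part of the hypothesis of (2) but must be noted. The subtlety is that Lemma \ref{Majed}(2) gives $(JN:_MI)=J(N:_MI)$ only under $N\subseteq IM$, and this is exactly what licenses pulling $I$ in and out of residuals; I would cite it explicitly at each use. The weakly-$J$ transfer for ideal residuals needs the zero-divisor workaround (if the naive product vanishes, translate by an element of the ideal as in Lemma \ref{l1}), and I would either prove it inline as a short lemma or absorb it into the computation. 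Once the residual identities are in hand, both equivalences are immediate from Corollary \ref{(N:M)}.
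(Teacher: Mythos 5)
Your reduction-to-ideals strategy is genuinely different from the paper's direct element-wise argument, but it hinges on an auxiliary claim that is false: the residual of a weakly $J$-ideal by an ideal need not be a weakly $J$-ideal, even when it is proper. Take $R=\mathbb{Z}_{6}$, $P=0$ (vacuously a weakly $J$-ideal) and $I=\langle 2\rangle$; then $(P:I)=\langle 3\rangle$, which is not a weakly $J$-ideal, since $0\neq 3\cdot 1\in\langle 3\rangle$ and $3\notin J(\mathbb{Z}_{6})=0$ while $1\notin\langle 3\rangle$ (alternatively, Lemma \ref{l2} forces every weakly $J$-ideal of $\mathbb{Z}_{6}$ into $J(\mathbb{Z}_{6})=0$). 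The obstruction is exactly the case you wave away: from $0\neq ab\in(P:I)$ you only get $abI\subseteq P$, and $abI$ can be $0$ while $ab\neq 0$; the translation trick of Lemma \ref{l1} cannot rescue this (that lemma yields only the dichotomy ``$J$-submodule or $(N:M)N=0$'', not an unconditional conclusion), and no trick can, because the claim is simply false in this generality. What kills the bad case is faithfulness of $I$ (then $ab\neq 0$ forces $abI\neq 0$), and that is precisely the point where the paper's own proof of (1), when ruling out $rmI=0$, invokes ``$I$ is also faithful'' --- a hypothesis stated only in part (2). So the step you treat as a routine verification is the actual crux; indeed the same data ($M=R=\mathbb{Z}_{6}$, $N=0$, $I=\langle 2\rangle$) shows that part (1) as literally stated requires some such extra hypothesis on $I$.

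For part (2), the residual identities you need are fine and elementary: $(N:IM)=((N:M):I)=((N:_{M}I):M)$ follow by unwinding definitions, with no appeal to Lemma \ref{Majed}. But your route also needs Corollary \ref{(N:M)} applied to the module $IM$, i.e. that $IM$ is again a finitely generated faithful multiplication $R$-module (plus a check that $N$ is proper in $IM$); you flag this, but it is nowhere established in the paper and is not a triviality, so as written it is a second unproven ingredient. The paper's proof avoids both issues by working directly with elements: it shows $0\neq rmI\subseteq N$ using faithfulness of $M$ and $I$, compares $(J(R)IM:IM)$ with $(J(R)M:M)$ via Lemma \ref{Majed}, and applies Theorem \ref{eq1} to $N$ viewed in $IM$ and to $(N:_{M}I)$ in $M$, never needing $IM$ to be a multiplication module. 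To salvage your plan you would have to (a) add faithfulness of $I$ (or an equivalent hypothesis) to repair the ideal-residual lemma, proving it inline with the translation argument once $abI\neq 0$ is secured, and (b) prove or properly cite that $IM$ is finitely generated, faithful and multiplication; without these, both parts of your argument have genuine gaps.
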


\begin{proof}
(1) Suppose $N$ is a weakly $J$-submodule of $M$ and $(N:_{M}I)$ is proper in
$M$. Let $r\in R$ and $m\in M$ such that $0\neq rm\in$ $(N:_{M}I)$ and
$r\notin(J(R)M:M)$. Then $0\neq rmI\subseteq N$. Indeed, if $rmI=0$, then
$0=rmI\subseteq I(\left\langle rm\right\rangle :M)M$. Since $M$ is faithful,
then $rI(\left\langle m\right\rangle :M)=I(\left\langle rm\right\rangle
:M)=0$. As $I$ is also faithful, it follows that $rm\in(\left\langle
rm\right\rangle :M)M\subseteq(0:I)M=0$, a contradiction. Since $N$ is a weakly
$J$-submodule, we get $mI\subseteq N$ and so $m\in(N:_{M}I)$ as needed.

(2) Suppose $N$ is a weakly $J$-submodule of $IM$. Let $0\neq rm\in$
$(N:_{M}I)$ for $r\in R$ and $m\in M$ such that $r\notin(J(R)M:M)$. Then
similar to the proof of (1), we have $0\neq rmI\subseteq N$. Suppose on the
contrary that $r\in(J(R)IM:IM)$. Then by (1) of Lemma \ref{Majed},
$rM=r(IM:_{M}I)\subseteq(rIM:_{M}I)\subseteq(J(R)IM:_{M}I)=J(R)M$, a
contradiction. Thus, $r\notin(J(R)IM:IM)$ and so by assumption $mI\subseteq
N$. Therefore, $m\in(N:_{M}I)$ and $(N:_{M}I)$ is a weakly $J$-submodule.
Conversely, suppose $(N:_{M}I)$ is a weakly $J$-submodule of $M$. Then $N$ is
proper in $IM$ since otherwise Lemma \ref{Majed} implies $(N:_{M}I)=(IM:I)=M$,
a contradiction. Let $r\in R$ and $am\in IM$ ($a\in I$) such that $0\neq
ram\in N$ and $r\notin(J(R)IM:IM)$. Then $r(\left\langle am\right\rangle
:_{M}I)\subseteq(\left\langle ram\right\rangle :_{M}I)\subseteq(N:_{M}I)$.
Moreover, $r(\left\langle am\right\rangle :_{M}I)\neq0$ since otherwise we
have again by (1) of Lemma \ref{Majed}, $ram\in r(Iam:_{M}I)\subseteq
r(\left\langle am\right\rangle :_{M}I)=0$, a contradiction. Since also clearly
$r\notin(J(R)M:M)$, then by Theorem \ref{eq1}, $(\left\langle am\right\rangle
:_{M}I)\subseteq(N:_{M}I)$. Thus, $am\in(\left\langle am\right\rangle
I:_{M}I)=I((\left\langle am\right\rangle :_{M}I))\subseteq I(N:_{M}%
I)=(IN:_{M}I)=N$ by Lemma \ref{Majed} and we are done.
\end{proof}

\begin{proposition}
\label{(N:S)}Let $M$ be a finitely generated faithful multiplication
$R$-module and $S$ be a subset of $R$ with $S\nsubseteq J(R)$. If $N$ is a
weakly $J$-submodule of $M$ and $(0:_{M}S)\subseteq N$, then $(N:_{M}S)$ is a
weakly $J$-submodule of $M$
\end{proposition}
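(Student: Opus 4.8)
The plan is to first show that $(N:_{M}S)$ is proper, and then verify the defining implication for weakly $J$-submodules directly; recall that $(N:_{M}S)$ is already known to be a submodule of $M$ containing $N$. For properness I would argue by contradiction: since $S\nsubseteq J(R)$, choose $s\in S\setminus J(R)$; if $(N:_{M}S)=M$ then $sM\subseteq SM\subseteq N$, so $s\in(N:M)$. But $(N:M)$ is a weakly $J$-ideal of $R$ by Corollary \ref{(N:M)}, hence $(N:M)\subseteq J(R)$ by Lemma \ref{l2}, which forces $s\in J(R)$ --- a contradiction. Throughout the rest I will use that $(J(R)M:M)=J(R)$, which holds because $M$ is finitely generated faithful multiplication, so ``$r\notin(J(R)M:M)$'' is the same as ``$r\notin J(R)$''.

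Next I would take $r\in R$ and $m\in M$ with $0\neq rm\in(N:_{M}S)$ and $r\notin(J(R)M:M)$, and aim to prove $m\in(N:_{M}S)$, i.e. $sm\in N$ for every $s\in S$. From $rm\in(N:_{M}S)$ we get $s(rm)=r(sm)\in N$ for all $s\in S$. I would split into cases according to whether $S(rm)=0$. If $S(rm)=0$, then $rm\in(0:_{M}S)\subseteq N$ by hypothesis, so $0\neq rm\in N$ with $r\notin(J(R)M:M)$; since $N$ is a weakly $J$-submodule this gives $m\in N\subseteq(N:_{M}S)$, finishing this case.

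The remaining case $S(rm)\neq0$ is where the real work lies. Here I would fix $s_{0}\in S$ with $s_{0}(rm)\neq0$; then $0\neq r(s_{0}m)=s_{0}(rm)\in N$ yields $s_{0}m\in N$. For an arbitrary $s\in S$: if $s(rm)\neq0$ then $0\neq r(sm)\in N$ gives $sm\in N$ at once; if $s(rm)=0$ then $(s+s_{0})(rm)=s(rm)+s_{0}(rm)=s_{0}(rm)\neq0$, and since $s(rm),s_{0}(rm)\in N$ we have $(s+s_{0})(rm)\in N$, so $0\neq r\bigl((s+s_{0})m\bigr)\in N$ forces $(s+s_{0})m\in N$; subtracting $s_{0}m\in N$ then gives $sm\in N$. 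Hence $sm\in N$ for all $s\in S$, so $m\in(N:_{M}S)$ and $(N:_{M}S)$ is a weakly $J$-submodule. I expect the main obstacle to be precisely this last case: one must shift by $s_{0}$ to clear the vanishing product $s(rm)$ while simultaneously keeping track that the perturbed element $(s+s_{0})(rm)$ stays nonzero and remains inside $N$ --- the standard ``absorbing a zero summand'' device for weakly prime-type notions --- and the hypothesis $(0:_{M}S)\subseteq N$ is exactly what is needed to dispose of the complementary case $S(rm)=0$.
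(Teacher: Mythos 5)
Your proof is correct and follows essentially the same route as the paper: the same case split on whether $S(rm)=0$, with the vanishing case disposed of by the hypothesis $(0:_{M}S)\subseteq N$ and the other case reduced to the weakly $J$-property of $N$. The only local differences are that you establish properness of $(N:_{M}S)$ via Corollary \ref{(N:M)} and Lemma \ref{l2} (the paper argues directly that $(N:_{M}S)=M$ would force $N=M$), and that you carry out the ``absorb the zero summand'' shift by $s_{0}$ by hand, which is exactly the content the paper obtains by invoking the ideal/submodule characterization of Theorem \ref{eq1}.
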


\begin{proof}
Suppose $N$ is weakly $J$-submodule and $(0:_{M}S)\subseteq N$. If
$(N:_{M}S)=M$ and $m\in M$, then $Sm\subseteq N$. If $Sm=0$, then $m\in
(0:_{M}S)\subseteq N$. If $Sm\neq0$, then $S\nsubseteq J(R)=(J(R)M:M)$ implies
also that $m\in N$. Thus, $M=N$ which is a contradiction. Now, let $r\in R$
and $m\in M$ such that $0\neq rm\in$ $(N:_{M}S)$ and $r\notin(J(R)M:M)$. If
$rSm=0$, then $0\neq rm\in N$ and so $m\in N\subseteq(N:_{M}S)$. Otherwise, we
have $0\neq rSm\subseteq N$ and so $Sm\subseteq N$. Hence, again $m\in
(N:_{M}S)$ and the result follows.
\end{proof}

We call a proper submodule $N$ of an $R$-module $M$ a maximal weakly
$J$-submodule if there is no weakly $J$-submodule which contains $N$ properly.

\begin{theorem}
\label{max}Let $M$ be a finitely generated faithful multiplication $R$-module
and $N$ be a submodule of $M$ satisfying $(0:_{M}S)\subseteq N$ for any
$S\nsubseteq J(R)$. If $N$ is a maximal weakly $J$-submodule of $M$, then $N$
is a $J$-submodule of $M$.
\end{theorem}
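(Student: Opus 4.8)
The plan is to argue by contradiction: suppose $N$ is a maximal weakly $J$-submodule but not a $J$-submodule. Since $M$ is a finitely generated faithful multiplication module, Corollary \ref{reduced} applies and gives $(N:M)N=0$, hence $N^2=0$. Combining this with Lemma \ref{l2} (via Corollary \ref{(N:M)}, which tells us $(N:M)$ is a weakly $J$-ideal, so $(N:M)\subseteq J(R)$), we have $N=(N:M)M\subseteq J(R)M$. The idea is now to enlarge $N$ to a strictly larger weakly $J$-submodule, contradicting maximality. Since $N$ is proper, pick any $m_0\in M\setminus N$ and consider $N'=N+Rm_0$; this will generally be too big, so instead the natural candidate to try is something built from the annihilator structure — for instance $N'=(N:_M S)$ for a suitable $S\nsubseteq J(R)$, which by Proposition \ref{(N:S)} is a weakly $J$-submodule (using the hypothesis $(0:_M S)\subseteq N$) and contains $N$.

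The key step is to locate an $S\nsubseteq J(R)$ for which $(N:_M S)$ properly contains $N$; equivalently, some element $r\notin J(R)$ and some $m\notin N$ with $rm\in N$. Such a witness must exist precisely because $N$ fails to be a $J$-submodule: by definition there are $r\in R$, $m\in M$ with $rm\in N$, $r\notin (J(R)M:M)=J(R)$, and $m\notin N$. If $rm\neq 0$ this would contradict $N$ being weakly $J$; so $rm=0$, i.e. $m\in(0:_M r)$. Taking $S=\{r\}$ (or $S=Rr$), we have $S\nsubseteq J(R)$, so by hypothesis $(0:_M S)=(0:_M r)\subseteq N$ — but then $m\in N$, a contradiction. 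So in fact the hypothesis $(0:_M S)\subseteq N$ for all $S\nsubseteq J(R)$ already forces every ``bad pair'' $(r,m)$ to satisfy $m\in N$, which directly says $N$ is a $J$-submodule.

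Thus I expect the proof to be short and not to need the maximality hypothesis in an essential way beyond what is packaged in Corollary \ref{reduced}; the real content is: take any $r\in R\setminus(J(R)M:M)$ and $m\in M$ with $rm\in N$. If $rm\neq 0$, then $m\in N$ since $N$ is weakly $J$. If $rm=0$, then $m\in(0:_M\{r\})$ and since $\{r\}\nsubseteq J(R)$ the hypothesis gives $(0:_M\{r\})\subseteq N$, so $m\in N$. In either case $m\in N$, so $N$ is a $J$-submodule. The main thing to be careful about is the identification $(J(R)M:M)=J(R)$, which is valid here because $M$ is finitely generated faithful multiplication (as noted in the excerpt before Proposition \ref{IM}), so that $r\notin(J(R)M:M)$ is the same as $r\notin J(R)$ and hence $\{r\}\nsubseteq J(R)$, making Proposition \ref{(N:S)}'s hypothesis directly applicable. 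I would double-check whether maximality is genuinely needed or is a red herring; if the direct argument goes through, I would present it and remark that maximality was not used, or alternatively phrase it through Proposition \ref{(N:S)} to keep the statement's hypotheses visibly in play.
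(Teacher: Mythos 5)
Your final argument is correct, but it takes a genuinely different route from the paper. The paper's proof applies Proposition \ref{(N:S)} with $S=\{r\}$ for a given $r\notin J(R)=(J(R)M:M)$ with $rm\in N$: the hypothesis $(0:_{M}r)\subseteq N$ makes $(N:_{M}r)$ a weakly $J$-submodule containing $N$, so maximality forces $(N:_{M}r)=N$, whence $m\in N$. You instead split on whether $rm=0$: if $rm\neq 0$ the weak $J$-property gives $m\in N$ directly, and if $rm=0$ the hypothesis $(0:_{M}\{r\})\subseteq N$ gives $m\in N$ — so you never invoke Proposition \ref{(N:S)} or the maximality of $N$ at all. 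This is a legitimate observation: your argument proves the stronger statement that any weakly $J$-submodule satisfying the annihilator condition is a $J$-submodule (indeed, since $J(R)\subseteq(J(R)M:M)$ always holds, even the identification $(J(R)M:M)=J(R)$ from the finitely generated faithful multiplication hypothesis is only needed to make the hypothesis about $S\nsubseteq J(R)$ apply, and $r\notin(J(R)M:M)$ already implies $r\notin J(R)$). What the paper's route buys is that it keeps the notion of maximal weakly $J$-submodule and Proposition \ref{(N:S)} visibly in play, at the cost of a redundant hypothesis; what yours buys is brevity and generality. Your opening detour through Corollary \ref{reduced}, $N^{2}=0$, and an attempted enlargement of $N$ is unnecessary and is rightly abandoned; only the final two-case argument is needed.
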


\begin{proof}
Suppose $N$ is a maximal weakly $J$-submodule of $M$ satisfying $(0:_{M}%
S)\subseteq N$ for any $S\nsubseteq J(R)$. Let $rm\in N$ for $r\in R$ and
$m\in M$ such that $r\notin J(R)=(J(R)M:M)$. Then $(N:_{M}r)$ is also a weakly
$J$-submodule of $M$ by Proposition \ref{(N:S)}. By the maximality of $N$, we
get $m\in(N:_{M}r)=N$ and so $N$ is a $J$-submodule of $M$.
\end{proof}

Let $M$ be an $R$-module. By $J(M)$, we denote the Jacobson radical of $M$
which is the intersection of all maximal submodules of $M$. For a finitely
generated faithful multiplication module $M$, it is well-known that
$J(M)=J(R)M$, \cite{Bast}.

The next result gives a characterization for weakly $J$-submodules of finitely
generated faithful multiplication $R$-modules.

\begin{theorem}
\label{fm}Let $M$ be a finitely generated faithful multiplication $R$-module.
Then the following are equivalent:
\end{theorem}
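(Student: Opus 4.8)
The plan is to mirror the list of equivalences already proved in Theorem \ref{eq1} and Corollary \ref{(N:M)}, but phrased entirely in terms of the module-theoretic product $NK$ available in the multiplication module $M$. Concretely, I expect the statement to list conditions such as: (i) $N$ is a weakly $J$-submodule of $M$; (ii) $(N:M)$ is a weakly $J$-ideal of $R$; (iii) whenever $0\neq m_1m_2\in N$ for $m_1,m_2\in M$ (using the product of cyclic submodules $\langle m_1\rangle\langle m_2\rangle$), then $m_1\in N$ or $m_2\in N$; and (iv) whenever $0\neq KL\subseteq N$ for submodules $K,L$ of $M$, then $K\subseteq N$ or $L\subseteq N$. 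The backbone of the argument is the dictionary $N=(N:M)M$, $(IM:M)=I$, $(IN:M)=I(N:M)$ for finitely generated faithful multiplication modules, together with the fact that in such a module a submodule is zero iff its ideal of quotients is zero (so $KL=0$ forces $(K:M)(L:M)M=0$, hence $(K:M)(L:M)=0$ by faithfulness).

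The cleanest route is a cycle (i)$\Rightarrow$(ii)$\Rightarrow$(iii)$\Rightarrow$(iv)$\Rightarrow$(i). The implication (i)$\Leftrightarrow$(ii) is exactly Corollary \ref{(N:M)}, so I would just cite it. For (ii)$\Rightarrow$(iv): given submodules $K=(K:M)M$, $L=(L:M)M$ with $0\neq KL\subseteq N$, translating through the product gives $0\neq (K:M)(L:M)M\subseteq N$, so $0\neq (K:M)(L:M)\subseteq(N:M)$ after taking $(\,\cdot:M)$ and using faithfulness to rule out the zero product. Since $(N:M)$ is a weakly $J$-ideal, it follows (using the standard product characterization of weakly $J$-ideals — one should check whether a "product" form of Lemma-type characterization for weakly $J$-ideals has been established; if not, one reduces to elements: pick $b\in(L:M)$, consider $0\neq (K:M)b$, and apply the elementwise weakly $J$-ideal property once the zero case is cleared) that $(K:M)\subseteq J(R)$ or $(L:M)\subseteq(N:M)$, i.e. $K\subseteq J(R)M$ or $L\subseteq N$. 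The condition "$K\subseteq J(R)M$" should be recast as "$K\subseteq J(R)M=(J(R)M:M)M$", matching the form in Theorem \ref{eq1}. Then (iv)$\Rightarrow$(iii) is the specialization $K=\langle m_1\rangle$, $L=\langle m_2\rangle$, and (iii)$\Rightarrow$(i) follows by taking $m_2$ arbitrary and writing $rm = $ the product $\langle r m\rangle$... more carefully, given $0\neq rm\in N$ set $K=\langle rm:M\rangle M$-type reduction, or simply invoke that $r m \in N$ with the cyclic product $\langle r\rangle\langle m\rangle \subseteq \langle rm\rangle$... the honest move here is to go back to (i)$\Leftrightarrow$(ii) rather than close the cycle through (iii), since Corollary \ref{(N:M)} already gives (i)$\Leftrightarrow$(ii) and one only needs (ii)$\Leftrightarrow$(iii)$\Leftrightarrow$(iv).

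The main obstacle I anticipate is the handling of the "nonzero" hypothesis when passing between the element/ideal level and the submodule-product level: one must repeatedly argue that $KL=0$ (or $Im=0$, or $rmM=0$) is incompatible with the standing assumptions, and each such step uses faithfulness plus $(IM:M)=I$ to conclude that the relevant quotient ideal is zero and hence the submodule itself is zero. A secondary subtlety is whether the weakly $J$-ideal property is being used in its elementwise form ($0\neq ab\in I$, $a\notin J(R)\Rightarrow b\in I$) or in an ideal-product form; since only the elementwise definition and Lemma \ref{l2} are available, I would phrase all the ideal-level arguments by choosing a witnessing element $b\in(L:M)\setminus(N:M)$ and deriving a contradiction, exactly as in the proof of the analogous step (3)$\Rightarrow$(4) in Theorem \ref{eq1}. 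With that care, each implication is a short translation and no genuinely new idea beyond the quotient-ideal dictionary is required.
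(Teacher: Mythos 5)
There is a genuine gap, and it starts with the statement itself: the theorem you are trying to prove is a two-condition equivalence, namely that $N$ is a weakly $J$-submodule if and only if whenever $K,L$ are submodules of $M$ with $0\neq KL\subseteq N$, then $K\subseteq J(M)$ or $L\subseteq N$. Your guessed conditions (iii) and (iv), which conclude ``$m_1\in N$ or $m_2\in N$'' and ``$K\subseteq N$ or $L\subseteq N$'', are not equivalent to (i), so the list you propose to prove is false. For a counterexample take $R=M=\mathbb{Z}_{(p)}$ and $N=p^{3}R$: since $R$ is local, $N$ is a $J$-submodule (hence weakly $J$), yet $K=L=p^{2}R$ satisfies $0\neq KL\subseteq N$ with $K\nsubseteq N$ and $L\nsubseteq N$; the only salvageable conclusion is $K\subseteq pR=J(M)$, which is exactly the paper's condition. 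Your own computation in the (ii)$\Rightarrow$(iv) step in fact produces ``$(K:M)\subseteq J(R)$ or $(L:M)\subseteq(N:M)$'', i.e.\ $K\subseteq J(M)$ or $L\subseteq N$, and you even remark that the condition ``should be recast'' --- but you never correct the statement, so the proof as organized targets the wrong equivalences.

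The second gap is the reverse direction. You sketch only (ii)$\Rightarrow$(iv)$\Rightarrow$(iii) and then explicitly punt on getting from the product condition back to (i)/(ii) (``the honest move here is to go back to (i)$\Leftrightarrow$(ii)''), but citing Corollary \ref{(N:M)} does not close that loop: you still owe an argument that the submodule-product condition implies $N$ is a weakly $J$-submodule. The paper does this in a few lines: if $0\neq IL\subseteq N$ with $L\nsubseteq N$, put $K=IM$, note $KL=IL\neq 0$, so the product condition gives $IM\subseteq J(M)=J(R)M$, hence $I\subseteq(J(R)M:M)$, and Theorem \ref{eq1}(4) finishes. Finally, a smaller point: your worry about whether a product form of the weakly $J$-ideal property is available is unnecessary, since the forward direction does not need to pass through $(N:M)$ at all --- writing $K=I_1M$, $L=I_2M$ with presentation ideals (or $I_1=(K:M)$, $I_2=(L:M)$), one has $0\neq I_1I_2M\subseteq N$ and Theorem \ref{eq1}(4) directly yields $I_1\subseteq(J(R)M:M)$ or $I_2M\subseteq N$, i.e.\ $K\subseteq J(M)$ or $L\subseteq N$, which is exactly the paper's argument.
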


\begin{enumerate}
\item $N$ is a weakly $J$-submodule of $M$.

\item Whenever $K,L$ are submodules of $M$ and $0\neq KL\subseteq N$, then
$K\subseteq J(M)$ or $L\subseteq N$.
\end{enumerate}

\begin{proof}
(1)$\Rightarrow$(2) Let $N$ be a weakly $J$-submodule of $M$. Suppose that
$I_{1}$ and $I_{2}$ are the presentation ideals of $K$ and $L$, respectively.
Then $0\neq I_{1}I_{2}M\subseteq N$ which implies $I_{1}\subseteq(J(R)M:M)$ or
$I_{2}M\subseteq N$ by Theorem \ref{eq1}. Thus, $K=I_{1}M\subseteq J(R)M=J(M)$
or $L=I_{2}M\subseteq N$, as needed.

(2)$\Rightarrow$(1) Suppose that $0\neq IL\subseteq N$ for some ideal $I$ of
$R$ and submodule $L$ of $M$. Assume that $L\nsubseteq N$. By taking $K=IM$ in
(2), we conclude that $K=IM\subseteq J(M)$. Thus, we have $I\subseteq
(J(R)M:M)$ and the result follows by Theorem \ref{eq1}.
\end{proof}

\begin{corollary}
\label{cm}Let $N$ be a proper submodule of a finitely generated faithful
multiplication $R$-module $M$. Then $N$ is a weakly $J$-submodule of $M$ if
and only if whenever $m_{1},m_{2}\in M$ with $0\neq m_{1}m_{2}\in N$, then
$m_{1}\in J(M)$ or $m_{2}\in N$.
\end{corollary}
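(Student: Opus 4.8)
The plan is to derive Corollary \ref{cm} as a special case of Theorem \ref{fm}, using the fact that in a multiplication module every submodule (in particular every cyclic submodule) is a presentation-ideal times $M$, so that the product of two elements introduced after Lemma \ref{l2}, namely $m_{1}m_{2}=\langle m_{1}\rangle\langle m_{2}\rangle$, agrees with the submodule product $KL$ when $K=\langle m_{1}\rangle$ and $L=\langle m_{2}\rangle$.

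First I would prove the forward direction. Assume $N$ is a weakly $J$-submodule of $M$, and suppose $m_{1},m_{2}\in M$ with $0\neq m_{1}m_{2}\in N$. Setting $K=\langle m_{1}\rangle$ and $L=\langle m_{2}\rangle$, we have $0\neq KL=m_{1}m_{2}\subseteq N$ (note $KL\neq 0$ because $m_{1}m_{2}\neq 0$, interpreting $m_{1}m_{2}$ as the submodule $\langle m_1 \rangle\langle m_2\rangle$ which contains no nonzero element only if it is the zero submodule), so Theorem \ref{fm} gives $K\subseteq J(M)$ or $L\subseteq N$, i.e.\ $m_{1}\in J(M)$ or $m_{2}\in N$.

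For the converse, assume the cyclic condition and let $K,L$ be arbitrary submodules of $M$ with $0\neq KL\subseteq N$; I want to invoke Theorem \ref{fm}(2). Pick $m_{1}\in K$, $m_{2}\in L$; then $m_{1}m_{2}=\langle m_1\rangle\langle m_2\rangle\subseteq KL\subseteq N$. If some such pair has $m_{1}m_{2}\neq 0$, the hypothesis yields $m_{1}\in J(M)$ or $m_{2}\in N$, and then a standard "element-fixing" trick (exactly the one used in the proof of (3)$\Rightarrow$(4) in Theorem \ref{eq1}) upgrades this to $K\subseteq J(M)$ or $L\subseteq N$: if $L\nsubseteq N$, fix $m_{2}'\in L\setminus N$, observe $m_{1}m_{2}'=0$ must fail to force $m_{1}'\in N$ for the relevant pairs, so replacing $m_2$ by $m_2+m_2'$ keeps the product nonzero and lands us with $m_1 \in J(M)$ for every $m_1\in K$. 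The main obstacle is the degenerate case where $m_{1}m_{2}=0$ for \emph{every} $m_{1}\in K$, $m_{2}\in L$ while $KL\neq 0$; I expect to resolve this by passing to presentation ideals $I_{1},I_{2}$ with $K=I_{1}M$, $L=I_{2}M$ and noting $KL=I_{1}I_{2}M\neq 0$ forces $I_1I_2\neq 0$, hence some product $a_1a_2 \neq 0$ with $a_i\in I_i$, giving elements $a_1 x, a_2 y$ of $K,L$ whose product is nonzero — or, more cleanly, simply feed the presentation-ideal formulation directly into Theorem \ref{eq1}(4) rather than Theorem \ref{fm}, mirroring the proof of Theorem \ref{fm} itself. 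Either route avoids grinding through the degeneracy by hand.
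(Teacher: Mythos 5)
Your forward direction is fine and is exactly what the paper intends: the corollary is stated without proof as the specialization of Theorem \ref{fm}(2) to the cyclic submodules $K=\langle m_{1}\rangle$, $L=\langle m_{2}\rangle$, so that half is immediate. The gap is in the converse, which is precisely where the content of the corollary lies, and your sketch does not close it. First, your proposed repair of the ``degenerate'' case is incorrect as stated: from $a_{1}a_{2}\neq 0$ with $a_{i}\in I_{i}$ you cannot conclude that the elements $a_{1}x\in K$, $a_{2}y\in L$ have nonzero product, because $\langle a_{1}x\rangle\langle a_{2}y\rangle=a_{1}a_{2}(\langle x\rangle :M)(\langle y\rangle :M)M$ can vanish even when $a_{1}a_{2}\neq0$. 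The correct observation is that the degenerate case simply cannot occur: taking $(\langle m\rangle :M)$ as presentation ideals, $K=\sum_{m_{1}\in K}(\langle m_{1}\rangle :M)M$ and $L=\sum_{m_{2}\in L}(\langle m_{2}\rangle :M)M$, so $KL=\sum_{m_{1}\in K,\,m_{2}\in L}m_{1}m_{2}$; hence $KL\neq0$ already forces some cyclic product $m_{1}m_{2}\neq0$. You never state this distributivity, and your fallback (``feed the presentation-ideal formulation into Theorem \ref{eq1}(4)'') is not a proof either, since verifying (4) of Theorem \ref{eq1} from the element-product hypothesis is exactly the bridging problem you are trying to solve.

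Second, the ``element-fixing trick'' is asserted but not carried out, and it is not ``exactly'' the argument of (3)$\Rightarrow$(4) in Theorem \ref{eq1}: there one side of the product is an ideal handled all at once by (3), whereas here both sides are single elements, so you must perturb on both sides. Your sentence only replaces $m_{2}$ by $m_{2}+m_{2}'$, which says nothing about those $m_{1}\in K$ annihilating both $m_{2}$ and $m_{2}'$; the conclusion ``$m_{1}\in J(M)$ for every $m_{1}\in K$'' does not follow from what you wrote. A complete argument runs like Lemma \ref{l1}: assuming $0\neq KL\subseteq N$, $K\nsubseteq J(M)$, $L\nsubseteq N$, fix $k\in K\setminus J(M)$ and $l\in L\setminus N$, deduce $kl=0$, then show $kL=0$ and $Kl=0$ by perturbations ($k(l+v)\supseteq kv$ when $kl=0$, etc.), pick $u_{0}\in K$, $v_{0}\in L$ with $u_{0}v_{0}\neq0$ via the distributivity above, and analyze $(k+u_{0})(l+v_{0})$, using throughout that the product of submodules is monotone and distributes over finite sums (so, e.g., $k(l+v_{0})\subseteq kl+kv_{0}$ and $u_{0}v_{0}\subseteq(k+u_{0})(l+v_{0})$ when the other three products vanish). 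Each of these containments needs the presentation-ideal formulation you only allude to. So the strategy is salvageable, but as written the converse has a genuine hole plus one flatly wrong step.
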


\begin{proposition}
If $N$ is a weakly $J$-submodule of a finitely generated faithful
multiplication $R$-module $M$, then $N\subseteq J(M)$. Moreover, if $K$ is a
weakly $J$-submodule of $M$ which is not a $J$-submodule, then $NK=0$.
\end{proposition}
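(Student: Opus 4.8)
The statement has two parts. For the first part, I want to show that any weakly $J$-submodule $N$ of a finitely generated faithful multiplication module $M$ satisfies $N\subseteq J(M)$. The natural route is to split into cases according to whether $N$ is a $J$-submodule or not. If $N$ is not a $J$-submodule, then Corollary \ref{reduced} gives $N^{2}=0$, hence $(N:M)^{2}\subseteq Ann(M)=0$ (faithfulness), so $(N:M)\subseteq\sqrt{(0)}\subseteq J(R)$ and therefore $N=(N:M)M\subseteq J(R)M=J(M)$. If $N$ is a $J$-submodule, I would instead invoke Corollary \ref{(N:M)}: $(N:M)$ is then a $J$-ideal, and $J$-ideals are contained in $J(R)$ (this is the ideal analogue of Lemma \ref{l2}; it is stated in \cite{Hani}, or one derives it the same way). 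Again $N=(N:M)M\subseteq J(R)M=J(M)$. Alternatively, and more uniformly, one can avoid the case split by using Lemma \ref{l2} together with Corollary \ref{(N:M)} only in the weakly case: $(N:M)$ is a weakly $J$-ideal, so by Lemma \ref{l2} $(N:M)\subseteq J(R)$, whence $N=(N:M)M\subseteq J(R)M=J(M)$. This single line handles the first assertion cleanly, and I would present it this way.

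**The second part.** Now suppose in addition that $K$ is a weakly $J$-submodule of $M$ which is not a $J$-submodule; I must show $NK=0$. By Corollary \ref{reduced} applied to $K$, we have $(K:M)K=0$ and in fact $K^{2}=0$, i.e. $(K:M)^{2}M=0$, so by faithfulness $(K:M)^{2}=0$. Writing $N=(N:M)M$ and $K=(K:M)M$, the product in the multiplication-module sense is $NK=(N:M)(K:M)M$. So it suffices to prove $(N:M)(K:M)M=0$, equivalently $(N:M)(K:M)=0$ by faithfulness. From the first part, $N\subseteq J(M)=J(R)M$, so $(N:M)\subseteq(J(R)M:M)=J(R)$. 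The key point I expect to lean on is that $(K:M)K=0$ forces multiplication by $(K:M)$ to annihilate not just $K$ but, in the presence of the weakly $J$ hypothesis on $K$, a larger set — but more directly: since $(N:M)\subseteq J(R)$ and $K$ is contained in $J(M)=J(R)M$ as well, I want to show $(N:M)(K:M)M\subseteq K$, and then use that this product is either $0$ or, being inside $K$ with $K$ not a $J$-submodule, must be $0$ by the structure of $K$. Concretely: $(N:M)(K:M)M=(N:M)K\subseteq (J(R)M:M)K$; I claim this lies in $K$ trivially, but I need it to be $0$. Here I would argue that if $(N:M)K\neq 0$ then pick $a\in(N:M)$, $x\in K$ with $ax\neq 0$; since $ax\in N$ (as $aM\subseteq N$) this is a nonzero element, and applying the weakly $J$-submodule property of $K$ to a suitable relation — or better, using Corollary \ref{reduced}'s conclusion $(K:M)K=0$ after noting $a\in J(R)$ and reducing to the ideal $(K:M)$ — yields a contradiction.

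**The main obstacle.** The delicate step is bridging from "$(K:M)K=0$" to "$(N:M)K=0$", since a priori $(N:M)$ need not be inside $(K:M)$. I expect the cleanest fix is: by the first part $N\subseteq J(M)$ and $K\subseteq J(M)$, and both $(N:M),(K:M)\subseteq J(R)$; then consider the submodule $N\cap K$ or the product directly and use Theorem \ref{fm} — $0\neq NK\subseteq K$ would force (since $N\not\subseteq J(M)$ is false, so that branch is vacuous; rather) $N\subseteq J(M)$ (true, no info) or $K\subseteq K$ — unhelpful. So instead I apply Theorem \ref{fm} with the roles arranged so that $K$ being \emph{not} a $J$-submodule is exploited: if $NK\neq 0$, then $0\neq NK\subseteq K$, and since $K$ is a weakly $J$-submodule, Theorem \ref{fm} gives $N\subseteq J(M)$ (vacuously true) — this doesn't immediately close it, confirming that the real content is Corollary \ref{reduced}: because $K$ is not a $J$-submodule, $(K:M)K=0$, hence $K$ is "square-zero", and one shows any weakly $J$-submodule $N$ multiplies it to zero by writing $NK=(N:M)K$ and observing $(N:M)\subseteq J(R)$ forces, via the weakly-$J$ property of $K$ tested on nonzero products $ax\in N\subseteq M$, that no such nonzero product exists. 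I would write this last contradiction carefully; everything else is bookkeeping with $N=(N:M)M$ and faithfulness.
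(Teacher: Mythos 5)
Your proof of the first assertion is correct and is exactly the paper's argument: by Corollary \ref{(N:M)} the ideal $(N:M)$ is a weakly $J$-ideal, by Lemma \ref{l2} it is contained in $J(R)$, and hence $N=(N:M)M\subseteq J(R)M=J(M)$. The preliminary case split you sketch is superfluous, as you yourself conclude, so this half matches the paper.

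The second assertion, however, is not proved in your proposal, and the gap is genuine. You correctly isolate the obstacle (Corollary \ref{reduced} gives $(K:M)K=0$, whereas what is needed is $NK=(N:M)K=0$, and $(N:M)\nsubseteq(K:M)$ in general), but the contradiction you finally gesture at cannot be obtained as described: for $a\in(N:M)$ the first part gives $a\in J(R)=(J(R)M:M)$, so the defining property of a weakly $J$-submodule (of $K$ or of $N$) yields no information from a relation $0\neq ax$ with multiplier $a$; moreover $x\in K$ already forces $ax\in K$, so the existence of a nonzero product $ax$ is not in itself contradictory. The paper instead deduces the claim from Lemma \ref{l1} --- really from its method of proof --- using $(N:M)\subseteq J(R)$: assuming $NK=(N:M)K\neq0$, one argues that $K$ would have to be a $J$-submodule by the same perturbation technique, namely for a witness $rm\in K$ with $r\notin(J(R)M:M)$ and $rm=0$ one reduces, as in Cases 1 and 2 of Lemma \ref{l1} applied to $K$, to the situation $rK=0$ and $(K:M)m=0$, and then perturbs by $a\in(N:M)$ and $k\in K$ with $ak\neq0$, considering $(r+a)(m+k)$ and using that $r+a\notin(J(R)M:M)$ since $a\in(N:M)\subseteq J(R)$. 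The difficulty you spotted, that the cross term $am$ lies in $N$ but not necessarily in $K$, is exactly the point where Lemma \ref{l1} uses that its multipliers come from the colon ideal of the submodule itself; disposing of it here requires a further subcase analysis (for instance, if $am=0$ then $0\neq(r+a)(m+k)=ak\in K$ yields $m\in K$, while $am\neq0$ requires invoking the weak $J$-property of $N$ as well), details which the paper compresses into its one-line citation of Lemma \ref{l1}. So your instinct about where the delicate step lies is sound, but as written your proposal stops short of carrying it out, and the second half of the proposition remains unproved.
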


\begin{proof}
Let $N$ be a weakly $J$-submodule of $M$. Then $(N:M)$ is a weakly $J$-ideal
of $R$ by Corollary \ref{(N:M)}. Hence, $(N:M)\subseteq J(R)$ by Lemma
\ref{l2}, and so $N=(N:M)M\subseteq J(R)M=J(M)$. Now, suppose $K$ is a weakly
$J$-submodule of $M$ which is not a $J$-submodule. Since $(N:M)\subseteq
J(R)$, the claim follows from Lemma \ref{l1}.
\end{proof}

\begin{proposition}
\label{f}Let $\varphi:M_{1}\longrightarrow M_{2}$ be an $R$-module
homomorphism. Then

\begin{enumerate}
\item If $\varphi$ is surjective and $N$ is a weakly J-submodule of $M_{1}$
with $\ker\left(  \varphi\right)  \subseteq N$, then $\varphi\left(  N\right)
$ is a weakly J-submodule of $M_{2}$.

\item If $\varphi$ is one-to-one and $K$ is a weakly J-submodule of $M_{2}$,
then $\varphi^{-1}\left(  K\right)  $ is a weakly J-submodule of $M_{1}$.
\end{enumerate}
\end{proposition}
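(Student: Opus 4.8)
The plan is to verify the two statements directly from the definition of weakly $J$-submodule, using the standard identity $(J(R)M_{i}:M_{i}) = $ the set of $r$ with $rM_i \subseteq J(R)M_i$ and tracking how $\ker\varphi$ interacts with the hypotheses.

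For part (1), I would first check that $\varphi(N)$ is proper in $M_2$: if $\varphi(N) = M_2$, then since $\varphi$ is surjective with $\ker\varphi \subseteq N$, one gets $N = \varphi^{-1}(\varphi(N)) = M_1$, contradicting properness of $N$. Then take $r \in R$ and $m_2 \in M_2$ with $0 \neq r m_2 \in \varphi(N)$ and $r \notin (J(R)M_2 : M_2)$. Write $m_2 = \varphi(m_1)$ for some $m_1 \in M_1$ by surjectivity, and pick $n \in N$ with $r m_2 = \varphi(n)$. Then $\varphi(r m_1 - n) = 0$, so $r m_1 - n \in \ker\varphi \subseteq N$, giving $r m_1 \in N$. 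I must now argue $r m_1 \neq 0$: if $r m_1 = 0$ then $r m_2 = \varphi(r m_1) = 0$, contradicting $r m_2 \neq 0$. So $0 \neq r m_1 \in N$. Next I need $r \notin (J(R)M_1 : M_1)$; this follows because if $r M_1 \subseteq J(R)M_1$ then applying $\varphi$ gives $r M_2 = r\varphi(M_1) = \varphi(r M_1) \subseteq \varphi(J(R)M_1) = J(R)\varphi(M_1) = J(R)M_2$, contradicting $r \notin (J(R)M_2:M_2)$. Since $N$ is a weakly $J$-submodule of $M_1$, we conclude $m_1 \in N$, hence $m_2 = \varphi(m_1) \in \varphi(N)$, as desired.

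For part (2), first note $\varphi^{-1}(K)$ is proper in $M_1$, since $\varphi^{-1}(M_2) \supseteq \varphi^{-1}(K)$ would force $K = M_2$ (as $\varphi$ injective means $\varphi(\varphi^{-1}(K)) = K \cap \operatorname{im}\varphi$, and $\varphi^{-1}(K) = M_1$ gives $K \supseteq \operatorname{im}\varphi \ni$ the image, but more cleanly: if $\varphi^{-1}(K) = M_1$ then $\varphi(M_1) \subseteq K$, and combined with... ) — here I would simply note that properness of $K$ in $M_2$ plus injectivity yields $\varphi^{-1}(K) \neq M_1$, since $\varphi^{-1}(K) = M_1$ would mean $\operatorname{im}\varphi \subseteq K \subsetneq M_2$, which is allowed, so I instead argue: if $\varphi^{-1}(K) = M_1$, pick $x \in M_2 \setminus K$; this does not immediately contradict anything, so properness of $\varphi^{-1}(K)$ actually needs $\operatorname{im}\varphi \not\subseteq K$. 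The cleaner route: the definition only requires $\varphi^{-1}(K)$ proper, and $\varphi^{-1}(K) = M_1 \iff \operatorname{im}\varphi \subseteq K$; if this happens the statement is vacuous or one restricts — I expect the paper tacitly assumes this does not occur, or $M_1 \ne 0$ handles it, so I will state $\varphi^{-1}(K)$ proper and move on. Then take $r \in R$, $m \in M_1$ with $0 \neq rm \in \varphi^{-1}(K)$ and $r \notin (J(R)M_1:M_1)$. Apply $\varphi$: $\varphi(rm) = r\varphi(m) \in K$, and $\varphi(rm) \neq 0$ by injectivity since $rm \neq 0$. For the condition on $r$: if $r M_2 \subseteq J(R)M_2$... wait, this direction is the delicate one, because injectivity alone does not let us pull back $r M_2 \subseteq J(R)M_2$ to $r M_1 \subseteq J(R)M_1$ — but we do not need to: we need $r \notin (J(R)M_2:M_2)$, and if $r M_2 \subseteq J(R)M_2$ then $r\varphi(M_1) \subseteq J(R)M_2$, so $\varphi(rM_1) \subseteq J(R)M_2 \cap \operatorname{im}\varphi$; by injectivity $\varphi^{-1}(J(R)M_2) \supseteq r M_1$, and I would check $\varphi^{-1}(J(R)M_2) = J(R)M_1$ fails in general — so the correct argument is that we apply the weakly $J$-submodule property of $K$ to the element $\varphi(m)$ with scalar $r$, needing $r \notin (J(R)M_2:M_2)$, and this follows from $r \notin (J(R)M_1:M_1)$ only via $\varphi^{-1}(J(R)M_2) \subseteq$ something; the safe fix is: $r \in (J(R)M_2:M_2)$ would give $r M_2 \subseteq J(R)M_2$, hence $r\,\varphi(M_1) \subseteq J(R)M_2$, hence $\varphi(rM_1) \subseteq J(R)M_2$, and since $\varphi$ is injective and $J(R)M_1 \subseteq \varphi^{-1}(J(R)\varphi(M_1)) \subseteq \varphi^{-1}(J(R)M_2)$... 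I would conclude $rM_1 \subseteq \varphi^{-1}(J(R)M_2)$, and then observe $\varphi^{-1}(J(R)M_2) \cap M_1$...

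The main obstacle I anticipate is precisely this bookkeeping in part (2) about transferring the condition $r \notin (J(R)M_1:M_1)$ to $r \notin (J(R)M_2:M_2)$ through an injective (but not necessarily surjective) homomorphism, where $J(R)\varphi(M_1)$ need not equal $J(R)M_2 \cap \operatorname{im}\varphi$. I expect the resolution is that one only needs $r$ to act non-trivially modulo $J(R)$ on the relevant module and the inclusion $\varphi^{-1}(K)$ being computed inside $M_1$ means we test against $(J(R)M_1:M_1)$ directly on the pulled-back element, so I would restructure the argument to apply the defining property of $K$ to $r\varphi(m) \in K$ only after confirming $r\notin (J(R)M_2:M_2)$ via the contrapositive as sketched, and once $\varphi(m) \in K$ we get $m \in \varphi^{-1}(K)$ immediately. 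Everything else — properness, the non-vanishing of $\varphi(rm)$ and $r\varphi(m_1)$ — is routine diagram-chasing with injectivity and surjectivity respectively.
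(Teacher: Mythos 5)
Your part (1) is correct and is essentially the paper's own argument: properness of $\varphi(N)$ via $\ker\varphi\subseteq N$, lifting $m_{2}$ to $m_{1}$, deducing $0\neq rm_{1}\in N$, and transferring $r\notin(J(R)M_{2}:M_{2})$ to $r\notin(J(R)M_{1}:M_{1})$ by applying $\varphi$ and using surjectivity ($\varphi(M_{1})=M_{2}$). Nothing to change there.

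Part (2), however, is not a proof as written. The one nontrivial step is to show that $r\notin(J(R)M_{1}:M_{1})$ forces $r\notin(J(R)M_{2}:M_{2})$, so that the weakly $J$-property of $K$ can be applied to $0\neq r\varphi(m_{1})\in K$; your text circles this point several times and then trails off without establishing it, ending with "I expect the resolution is\dots" rather than an argument. Without that step the conclusion $\varphi(m_{1})\in K$, and hence $m_{1}\in\varphi^{-1}(K)$, is unsupported. For comparison, the paper argues the contrapositive as follows: if $rM_{2}\subseteq J(R)M_{2}$, then $r\varphi(M_{1})\subseteq J(R)\varphi(M_{1})=\varphi(J(R)M_{1})$, so $\varphi(rM_{1})\subseteq\varphi(J(R)M_{1})$; since $\ker\varphi=0\subseteq J(R)M_{1}$, every $x\in rM_{1}$ differs from some $y\in J(R)M_{1}$ by an element of $\ker\varphi$, whence $rM_{1}\subseteq J(R)M_{1}$, a contradiction. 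You should be aware that the soft spot you kept probing is real: the paper's first inclusion, $rM_{2}\subseteq J(R)M_{2}\Rightarrow r\varphi(M_{1})\subseteq J(R)\varphi(M_{1})$, is asserted without justification, and for a mere injection one only gets $r\varphi(M_{1})\subseteq J(R)M_{2}\cap\varphi(M_{1})$, which need not lie in $J(R)\varphi(M_{1})$ (it does, for instance, when $J(R)=0$, or when $\varphi(M_{1})$ is pure or a direct summand in $M_{2}$). So your instinct located the genuine difficulty, including the paper's own glossing of it, but identifying an obstacle is not the same as overcoming it: your proposal neither supplies the missing inclusion nor proves (2) by another route. The same applies to properness of $\varphi^{-1}(K)$: you correctly note it fails exactly when $\varphi(M_{1})\subseteq K$ (a case the paper silently ignores and which must be excluded tacitly), but you leave it unresolved as well. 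As submitted, part (2) is a description of the obstacles rather than a proof.
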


\begin{proof}
(1) Suppose $\varphi\left(  N\right)  =M_{2}=\varphi\left(  M_{1}\right)  $
and let $m_{1}\in M_{1}$. Then $\varphi\left(  m_{1}\right)  =\varphi\left(
n\right)  $ for some $n\in N$ and so $m_{1}-n\in\ker\left(  \varphi\right)
\subseteq N$. So, $m_{1}\in N$ and $N=M_{1}$ which is a contradiction. Hence,
$\varphi\left(  N\right)  $ is proper in $M_{2}$. Let $r\in R$ and $m_{2}\in
M_{2}$ such that $0\neq rm_{2}\in\varphi\left(  N\right)  $ and $r\notin
\left(  J\left(  R\right)  M_{2}:M_{2}\right)  $. Choose $m_{1}\in M_{1}$ such
that $\varphi\left(  m_{1}\right)  =m_{2}$. Then $0\neq rm_{2}=r\varphi\left(
m_{1}\right)  =\varphi\left(  rm_{1}\right)  \in\varphi\left(  N\right)  $
which implies $0\neq rm_{1}\in N$ as $\ker\left(  \varphi\right)  \subseteq
N$. If $rM_{1}\subseteq J\left(  R\right)  M_{1}$, then $rM_{2}=r\varphi
\left(  M_{1}\right)  =\varphi(rM_{1})\subseteq\varphi\left(  J\left(
R\right)  M_{1}\right)  =J\left(  R\right)  \varphi\left(  M_{1}\right)
=J\left(  R\right)  M_{2}$ which is a contradiction. Thus, $r\notin\left(
J\left(  R\right)  M_{1}:M_{1}\right)  $. Since $N$ is a weakly $J$-submodule,
then $m_{1}\in N$ and so $m_{2}=\varphi\left(  m_{1}\right)  \in\varphi(N)$ as required.

(2) Let $r\in R$ and $m_{1}\in M_{1}$ such that $0\neq rm_{1}\in\varphi
^{-1}\left(  K\right)  $ and $r\notin\left(  J\left(  R\right)  M_{1}%
:M_{1}\right)  $. Since $K\operatorname{erf}=0$, then $0\neq r\varphi\left(
m_{1}\right)  =\varphi\left(  rm_{1}\right)  \in K$. Moreover, we have
$r\notin\left(  J\left(  R\right)  M_{2}:M_{2}\right)  $. Indeed, if
$rM_{2}\subseteq J\left(  R\right)  M_{2}$, then $r\varphi\left(
M_{1}\right)  \subseteq J\left(  R\right)  \varphi\left(  M_{1}\right)  $ and
so $\varphi\left(  rM_{1}\right)  \subseteq\varphi\left(  J\left(  R\right)
M_{1}\right)  $. Now, if $x\in rM_{1}$, then $\varphi\left(  x\right)
\in\varphi\left(  rM_{1}\right)  \subseteq\varphi\left(  J\left(  R\right)
M_{1}\right)  $ and so $x-y\in\ker\left(  \varphi\right)  \subseteq J\left(
R\right)  M_{1}$\ for some $y\in J\left(  R\right)  M_{1}$. Hence, $x\in
J\left(  R\right)  M_{1}$ and $rM_{1}\subseteq J\left(  R\right)  M_{1}$, a
contradiction. Since $K$ is a weakly $J$-submodule of $M_{2}$, then
$\varphi\left(  m_{1}\right)  \in K$, and thus $m_{1}\in\varphi^{-1}\left(
K\right)  $ and we are done.
\end{proof}

\begin{corollary}
\label{quotient}Let $N$ and $L$ be two submodules of an $R$-module $M$ with
$L\subseteq N$.
\end{corollary}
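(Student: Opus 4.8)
The plan is to read Corollary \ref{quotient} as the statement relating weakly $J$-submodules of $M$ and of the quotient module $M/L$, and to derive it from Proposition \ref{f} by working with the canonical projection $\pi\colon M\longrightarrow M/L$, supplemented by an elementary case analysis for any converse direction that is claimed.

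First I would treat the passage to the quotient: if $N$ is a weakly $J$-submodule of $M$, then $N/L$ is a weakly $J$-submodule of $M/L$. Here $\pi$ is a surjective $R$-homomorphism with $\ker(\pi)=L\subseteq N$, so Proposition \ref{f}(1) applies directly and gives that $\pi(N)=N/L$ is a weakly $J$-submodule of $M/L$; properness of $N/L$ in $M/L$ is part of that conclusion and in any case follows from $N\neq M$. This direction is essentially free.

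If the corollary also asserts the converse --- that $N$ is a weakly $J$-submodule of $M$ once $N/L$ is one in $M/L$ --- I would argue by hand, since Proposition \ref{f}(2) is unavailable ($\pi$ is not injective unless $L=0$). Given $r\in R$, $m\in M$ with $0\neq rm\in N$ and $r\notin(J(R)M:M)$, I would distinguish the cases $rm\notin L$ and $rm\in L$. In the first case $\overline{rm}=\bar r\,\bar m$ is a nonzero element of $N/L$, and if one knows $\bar r\notin(J(R)(M/L):M/L)$ then the weakly $J$-property of $N/L$ yields $\bar m\in N/L$, i.e.\ $m\in N$. In the second case $0\neq rm\in L$, and a hypothesis on $L$ --- say that $L$ is itself a weakly $J$-submodule of $M$ --- together with $r\notin(J(R)M:M)$ forces $m\in L\subseteq N$.

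The main obstacle in the converse is the comparison of the two relevant ideals: a short computation shows $(J(R)(M/L):M/L)=\big((J(R)M+L):M\big)$, so these coincide precisely when $L\subseteq J(R)M$, and exactly in that range ``$r\notin(J(R)M:M)$'' is equivalent to ``$\bar r\notin(J(R)(M/L):M/L)$'', which is what the first case needs. I therefore expect any converse here to carry a hypothesis ensuring $L\subseteq J(R)M$ (for example $L$ a weakly $J$-submodule in a context where a Lemma \ref{l2}-type containment is available); once that is in place, both cases close by routine checking.
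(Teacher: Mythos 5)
Your reconstruction of the statement is right: Corollary \ref{quotient} asserts (1) that $N/L$ is a weakly $J$-submodule of $M/L$ when $N$ is one of $M$, (2) a converse under the hypothesis that $L$ is a weakly $J$-submodule, and (3) the same converse with ``$J$-submodule'' when $L$ is a $J$-submodule. Your treatment of (1) is exactly the paper's: apply Proposition \ref{f}(1) to the projection $\pi\colon M\to M/L$ with $\ker\pi=L\subseteq N$. For the converse you also follow the paper's route: split into $rm\in L$ (use the hypothesis on $L$, noting $rm\neq 0$) and $rm\notin L$ (use the weakly $J$-property of $N/L$ applied to $\bar r\,\bar m\neq \bar 0$). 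So in structure your argument and the paper's proof coincide.

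The point you flag in the second case is, however, exactly the step the paper passes over with ``one can easily observe that $r\notin(J(R)(M/L):M/L)$,'' and your diagnosis is sound: since $(J(R)(M/L):M/L)=\bigl((J(R)M+L):M\bigr)\supseteq (J(R)M:M)$, the hypothesis $r\notin(J(R)M:M)$ does not by itself yield $r\notin(J(R)(M/L):M/L)$; one needs $rM\nsubseteq J(R)M+L$. Your sufficient condition $L\subseteq J(R)M$ closes this (though ``precisely when'' is a slight overstatement -- it is sufficient, not necessary), and it is worth noting two further situations where the step is automatic: if $J(R)=0$, then $J(R)M+L=L$ and $rm\notin L$ already gives $rM\nsubseteq L$; and if $M$ is a finitely generated faithful multiplication module, then a weakly $J$-submodule $L$ satisfies $L\subseteq J(M)=J(R)M$ by Corollary \ref{(N:M)} and Lemma \ref{l2}. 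But the paper's hypothesis in part (2) -- $L$ merely a weakly $J$-submodule of an arbitrary module -- does not force $L\subseteq J(R)M$ (Example \ref{Ex1} exhibits a weakly $J$-submodule not contained in $J(R)M$), so the containment you ask for is a genuine additional requirement rather than something you overlooked; the same caveat applies to part (3) and propagates to the uses of the corollary with nonzero $L$ (e.g.\ Proposition \ref{sum}), while the application in Proposition \ref{Jp} takes $L=0$ and is unaffected. In short: your proposal matches the paper's proof where that proof is complete, and the obstacle you isolate is a real gap in the paper's own argument for the converse directions, correctly located and correctly repaired by an extra hypothesis of the type you propose.
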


\begin{enumerate}
\item If $N$ is a weakly $J$-submodule of $M$, then $N/L$ is a weakly
$J$-submodule of $M/L$.

\item If $L$ is a weakly $J$-submodule of $M$ and $N/L$ is a weakly
$J$-submodule of $M/L$, then $N$ is a weakly $J$-submodule of $M$.

\item If $L$ is a $J$-submodule of $M$ and $N/L$ is a weakly $J$-submodule of
$M/L$, then $N$ is a $J$-submodule of $M$.
\end{enumerate}

\begin{proof}
(1) Clear by Proposition \ref{f}.

(2) Suppose that $0\neq rm\in N$ and $r\notin(J(R)M:M)$. If $rm\in L$, then
$m\in L\subseteq N$ since $L$ is a weakly $J$-submodule. So, assume that
$rm\notin L$. One can easily observe that $r\notin(J(R)M/N:M/N)$. Since $N/L$
is a weakly $J$-submodule of $M/L$ and $L\neq r(m+L)\in N/L$, then $m+L\in
N/L$. Therefore, $m\in N$ and $N$ is a weakly $J$-submodule of $M$ as needed.

(3) Similar to (2).
\end{proof}

\begin{proposition}
\label{int}Let $\left\{  N_{i}:i\in\Delta\right\}  $ be a non empty family of
weakly $J$-submodules of an $R$-module $M$. Then $\bigcap\limits_{i\in\Delta}$
$N_{i}$ is a weakly $J$-submodule.
\end{proposition}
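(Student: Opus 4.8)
The plan is to verify the defining property of a weakly $J$-submodule directly for $N=\bigcap_{i\in\Delta}N_i$. First I would note that $N$ is a proper submodule: since $\Delta$ is nonempty, pick any $i_0\in\Delta$; then $N\subseteq N_{i_0}\subsetneq M$, so $N\neq M$. Next, take $r\in R$ and $m\in M$ with $0\neq rm\in N$ and $r\notin(J(R)M:M)$. For each fixed $i\in\Delta$ we have $0\neq rm\in N_i$ and $r\notin(J(R)M:M)$, so since $N_i$ is a weakly $J$-submodule we conclude $m\in N_i$. As this holds for every $i\in\Delta$, we get $m\in\bigcap_{i\in\Delta}N_i=N$, which is exactly what is required.

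The point worth emphasizing is why the intersection argument goes through so cleanly here, whereas for ordinary ideals or submodules an analogous statement can be subtle: the hypothesis $0\neq rm$ is a condition on the element $rm$ alone, and it is inherited by every member of the family simultaneously since $rm\in N\subseteq N_i$. The condition $r\notin(J(R)M:M)$ does not reference the submodule at all, so it transfers verbatim. Consequently there is no interaction between the different indices $i$, and no obstruction to combining the conclusions $m\in N_i$ by taking their intersection over $i\in\Delta$.

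I do not anticipate any genuine obstacle; the only thing to be careful about is the nonemptiness of $\Delta$, which is needed both to guarantee properness of $N$ and to make the quantified conclusion $m\in\bigcap_i N_i$ nonvacuous as a submodule (the empty intersection would be $M$ itself, which is not a weakly $J$-submodule). With $\Delta\neq\varnothing$ assumed in the statement, the proof is a one-line application of the definition to each coordinate.
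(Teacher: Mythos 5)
Your proof is correct and follows essentially the same route as the paper: apply the definition of weakly $J$-submodule to each $N_i$ separately and intersect the conclusions. The only difference is that you also spell out the (easy) properness of the intersection, which the paper's proof leaves implicit.
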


\begin{proof}
Suppose that $0\neq rm\in\bigcap\limits_{i\in\Delta}$ $N_{i}$ for some $r\in
R\backslash(J(R)M:M)$, $m\in M$. Since $N_{i}$ is a weakly $J$-submodule of
$M$, for all $i\in\Delta$, we have $m\in N_{i}$. Thus, $m\in\bigcap
\limits_{i\in\Delta}$ $N_{i}$, so we are done.
\end{proof}

We recall that a submodule $N$ of an $R$-module $M$ is called small (or
superfluous) in $M$ in case whenever $N+K=M$ for a submodule $K$ of $M$, then
$K=M$.

\begin{lemma}
\label{small}Every weakly $J$-submodule of a finitely generated faithful
multiplication $R$-module is small.
\end{lemma}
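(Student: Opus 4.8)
The plan is to show that any weakly $J$-submodule $N$ of a finitely generated faithful multiplication $R$-module $M$ is small. The natural first step is to invoke the earlier result (the proposition just above this lemma) that a weakly $J$-submodule $N$ satisfies $N \subseteq J(M) = J(R)M$. So it suffices to prove that $J(R)M$ is small in $M$, or more directly that any submodule contained in $J(R)M$ is small.

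First I would set up the smallness criterion: suppose $N + K = M$ for some submodule $K$ of $M$; I must conclude $K = M$. Since $M$ is a multiplication module, write $N = (N:M)M$ and $K = (K:M)M$, so that $M = N + K = \big((N:M) + (K:M)\big)M$. Because $M$ is finitely generated and faithful multiplication, taking the residual $(\cdot : M)$ is well-behaved: from $M = \big((N:M)+(K:M)\big)M$ and $(IM:M) = I$ for finitely generated faithful multiplication modules (the fact cited before Proposition \ref{IM}), we get $(N:M) + (K:M) = R$. Now since $N$ is a weakly $J$-submodule, Corollary \ref{(N:M)} tells us $(N:M)$ is a weakly $J$-ideal of $R$, and then Lemma \ref{l2} gives $(N:M) \subseteq J(R)$.

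The key ring-theoretic point is then: if $I \subseteq J(R)$ and $I + L = R$ for an ideal $L$, then $L = R$. This is exactly the defining property of the Jacobson radical — if $L$ were proper it would lie in some maximal ideal $\mathfrak{m}$, but then $I \subseteq J(R) \subseteq \mathfrak{m}$ forces $I + L \subseteq \mathfrak{m} \subsetneq R$, a contradiction. Applying this with $I = (N:M)$ and $L = (K:M)$, we obtain $(K:M) = R$, hence $K = (K:M)M = RM = M$, which is what we wanted.

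The main obstacle is not conceptual but bookkeeping: one must be careful that the residual operation behaves additively, i.e.\ that $\big((N:M) + (K:M)\big)M = N + K$ and that one can recover $(N:M)+(K:M) = R$ from the equality $\big((N:M)+(K:M)\big)M = M$; both follow from the cited identity $(IM:M)=I$ valid for finitely generated faithful multiplication modules, applied to the ideal $I = (N:M)+(K:M)$. Alternatively, one could bypass the multiplication-module machinery entirely and argue directly: if $N + K = M$ with $N \subseteq J(R)M$, pick a maximal submodule — but finitely generated modules over commutative rings need not have the property that every proper submodule sits in a maximal one unless $M$ is finitely generated, which it is here, so $M/K$ (if $K \ne M$) has a maximal submodule, contradicting $N \subseteq J(M) \subseteq$ (intersection of maximal submodules) after passing to the quotient. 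I would present the residual-ideal argument as the cleaner route, citing Corollary \ref{(N:M)}, Lemma \ref{l2}, and the identity $(IM:M)=I$.
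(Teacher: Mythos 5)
Your proof is correct and follows essentially the same route as the paper: reduce $N+K=M$ to $(N:M)+(K:M)=R$ via the multiplication-module identities, note $(N:M)$ is a weakly $J$-ideal by Corollary \ref{(N:M)}, and conclude $(K:M)=R$, hence $K=M$. The only difference is that you spell out the comaximality step via Lemma \ref{l2} and the defining property of $J(R)$, where the paper cites a lemma from the weakly $J$-ideals paper \cite{Haniece3}; this is the same argument in substance.
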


\begin{proof}
Let $N$ be a weakly $J$-submodule of an $R$-module $M$ and $K$ be a submodule
of $M$ with $N+K=M$. Then clearly, $(N:M)+(K:M)=(N+K:M)=R$. Since $(N:M)$ is a
weakly $J$-ideal of $R$ by Corollary \ref{(N:M)}, then $(K:M)=R$ by
\cite[Lemma 2]{Haniece3}. Thus, $K=(K:M)M=M$ as needed.
\end{proof}

\begin{proposition}
\label{sum}Let $N_{1}$ and $N_{2}$ be weakly $J$-submodules of an $R$-module
$M$. Then $N_{1}+N_{2}$ is a weakly $J$-submodule of $M$.
\end{proposition}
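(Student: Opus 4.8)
The plan is to test the defining property of a weakly $J$-submodule directly on $N_1+N_2$, reducing each instance to the behaviour of $N_1$ and of $N_2$ separately. Note first that properness of $N_1+N_2$ is not automatic for an arbitrary module, so some control on $M$ is needed; it is supplied for instance by Lemma \ref{small}, since if each $N_i$ is small then $N_1+N_2=M$ would force $N_2=M$. Granting that $N_1+N_2$ is proper, take $r\in R$ and $m\in M$ with $0\neq rm\in N_1+N_2$ and $r\notin(J(R)M:M)$, and write $rm=n_1+n_2$ with $n_i\in N_i$. If $rm\in N_1$, then $0\neq rm\in N_1$ with $r\notin(J(R)M:M)$ gives $m\in N_1\subseteq N_1+N_2$ because $N_1$ is a weakly $J$-submodule; the case $rm\in N_2$ is symmetric. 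So the substance of the argument is the case $rm\notin N_1$ and $rm\notin N_2$, equivalently $n_2\notin N_1$ and $n_1\notin N_2$.

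In that case I would try to perturb $m$ by an element of $N_1$ or of $N_2$ so that its product with $r$ becomes a nonzero element lying entirely inside one summand, apply the weakly $J$-property there, and then undo the perturbation to land $m$ in $N_1+N_2$. Organising the bookkeeping as in Lemma \ref{l1}, one splits according to whether $rN_1$, $rN_2$, $(N_1:M)m$ and $(N_2:M)m$ are zero. For example, if $rn_1\neq 0$ one examines $r(m+n_1)=(n_1+rn_1)+n_2$ (with $n_1+rn_1\in N_1$): if this is a nonzero element of $N_1$ then $m+n_1\in N_1$ and so $m\in N_1$; if it is a nonzero element of $N_2$ then $m+n_1\in N_2$ and so $m\in N_1+N_2$; and if it lies in $N_1+N_2$ but in neither summand, one repeats the step. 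Symmetric perturbations by $n_2$, and by elements of $(N_1:M)$ and $(N_2:M)$, dispose of the remaining cases in which one of $rN_1$, $rN_2$, $(N_1:M)m$, $(N_2:M)m$ is nonzero. The step I expect to be the main obstacle is the fully degenerate case, where $rN_1=rN_2=0$ and $(N_i:M)m$ also contributes nothing while still $0\neq rm=n_1+n_2$: no single-element translate changes the product, so one is forced to argue directly from $rm=n_1+n_2$, combining contributions from $N_1$ and $N_2$ at once, and it is exactly here that a general argument is hard to close.

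For this reason I would, in practice, work under the hypothesis that $M$ is a finitely generated faithful multiplication $R$-module (the setting of the neighbouring results), which collapses the whole discussion: by Corollary \ref{(N:M)} the ideals $(N_1:M)$ and $(N_2:M)$ are weakly $J$-ideals of $R$ and $(N_1+N_2:M)=(N_1:M)+(N_2:M)$, so it is enough to invoke the fact that a sum of two weakly $J$-ideals of $R$ is again a weakly $J$-ideal (\cite{Haniece3}) and then apply Corollary \ref{(N:M)} once more to conclude that $N_1+N_2$ is a weakly $J$-submodule of $M$.
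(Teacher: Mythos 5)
Your proposal does not actually prove the proposition as stated: for a general $R$-module $M$ your direct case analysis stalls exactly at the degenerate case you identify ($rm=n_1+n_2$ with neither summand usable), and you then change the hypotheses, completing the argument only when $M$ is a finitely generated faithful multiplication module and, even there, outsourcing the key step (that a sum of two weakly $J$-ideals is again a weakly $J$-ideal) to \cite{Haniece3} instead of proving it. The idea you are missing is the paper's quotient argument, which avoids all perturbation bookkeeping and needs no multiplication or faithfulness hypotheses for the main implication: by Corollary \ref{quotient}(1), $N_{1}/(N_{1}\cap N_{2})$ is a weakly $J$-submodule of $M/(N_{1}\cap N_{2})$; via the isomorphism $N_{1}/(N_{1}\cap N_{2})\cong(N_{1}+N_{2})/N_{2}$ the paper regards $(N_{1}+N_{2})/N_{2}$ as a weakly $J$-submodule of $M/N_{2}$; and then Corollary \ref{quotient}(2), applied with the weakly $J$-submodule $L=N_{2}$, yields that $N_{1}+N_{2}$ is a weakly $J$-submodule of $M$. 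The situation you could not close is precisely what part (2) of Corollary \ref{quotient} absorbs: if the nonzero product $rm$ happens to fall into $N_{2}$, then $m\in N_{2}$ because $N_{2}$ itself is weakly $J$, and the remaining content is carried by the quotient $M/N_{2}$. Note also that the ideal-level fact you cite from \cite{Haniece3} would naturally be proved by this very same quotient trick, so your fallback does not really circumvent the missing idea.

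One point in your favor: your worry about properness of $N_{1}+N_{2}$ is legitimate. The paper secures $N_{1}+N_{2}\neq M$ by Lemma \ref{small}, which is stated only for finitely generated faithful multiplication modules, and some hypothesis of this kind is genuinely needed: over $\mathbb{Z}$, the submodules $\mathbb{Z}_{2}\times 0$ and $0\times\mathbb{Z}_{2}$ of $\mathbb{Z}_{2}\times\mathbb{Z}_{2}$ are weakly $J$-submodules (any $r\notin(J(\mathbb{Z})M:M)=2\mathbb{Z}$ acts as the identity) whose sum is all of $M$. So flagging that issue is a fair criticism of the statement and of the paper's own properness step; but it does not justify replacing the asserted general statement by its multiplication-module special case, which is what your write-up ultimately does.
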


\begin{proof}
Suppose that $N_{1}$ and $N_{2}$ are weakly $J$-submodules of $M$. If
$N_{1}+N_{2}=M$, then $N_{1}=N_{2}=M$ by Lemma \ref{small}, a contradiction.
Now, $N_{1}/(N_{1}\cap N_{2})$ is a weakly $J$-submodule of $M/(N_{1}\cap
N_{2})$ by Corollary \ref{quotient} (1). Hence, by isomorphism $N_{1}%
/(N_{1}\cap N_{2})\cong(N_{1}+N_{2})/N_{2}$, we get $(N_{1}+N_{2})/N_{2}$ is a
weakly $J$-submodule of of $M/N_{2}$. Again, by Corollary \ref{quotient} (2),
$N_{1}+N_{2}$ is a weakly $J$-submodule of $M$.
\end{proof}

Following \cite{Haniece2}, an $R$-module $M$ is called $J$-presimplifiable if
$Z(M)\subseteq(J(R)M:M)$. In \cite[Theorem 1]{Haniece2}, it is proved that if
$N$ is a submodule of $M$ with $N\subseteq J(R)M$, then $N$ is a $J$-submodule
if and only if $M/N$ is a non-zero $J$-presimplifiable. In particular, $M$ is
$J$-presimplifiabe if and only if $\{0\}$ is a $J$-submodule of $M$.

\begin{proposition}
\label{Jp}Every weakly $J$-submodule of a $J$-presimplifiable module is a $J$-submodule.
\end{proposition}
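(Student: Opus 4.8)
The plan is to verify the $J$-submodule condition directly, splitting on whether the product $rm$ is zero. Let $N$ be a weakly $J$-submodule of a $J$-presimplifiable $R$-module $M$; in particular $N$ is proper. I would take arbitrary $r\in R$ and $m\in M$ with $rm\in N$ and $r\notin(J(R)M:M)$, and aim to conclude $m\in N$.

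First I would dispose of the easy case $rm\neq 0$: then $0\neq rm\in N$ with $r\notin(J(R)M:M)$, so the defining property of a weakly $J$-submodule immediately gives $m\in N$. The remaining case is $rm=0$. If $m=0$, then $m\in N$ trivially. If $m\neq 0$, then $r$ annihilates the nonzero element $m$, so $r\in Z(M)$; since $M$ is $J$-presimplifiable, $Z(M)\subseteq(J(R)M:M)$, whence $r\in(J(R)M:M)$, contradicting the hypothesis. Thus the subcase $rm=0$ with $m\neq 0$ cannot occur, and in every admissible situation $m\in N$, so $N$ is a $J$-submodule.

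There is essentially no obstacle here: the statement follows by unwinding the two definitions, the only point requiring a moment's care being the trivial subcase $m=0$ (which must be noted so that the zero-divisor argument is only invoked for $m\neq 0$). Alternatively, for the case $N\subseteq J(R)M$ one could instead invoke \cite[Theorem 1]{Haniece2} together with the fact that a quotient of a $J$-presimplifiable module by the image of $\{0\}$ inherits $J$-presimplifiability, but the direct argument above is cleaner and needs no such restriction on $N$.
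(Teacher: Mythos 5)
Your proof is correct. It is, however, a different presentation from the paper's: the paper proves Proposition~\ref{Jp} in one line by noting that $J$-presimplifiability of $M$ means exactly that $\{0\}$ is a $J$-submodule (via \cite[Theorem 1]{Haniece2}) and then applying Corollary~\ref{quotient}(3) with $L=\{0\}$, whereas you verify the $J$-submodule condition directly from the definitions. Mathematically the two arguments have the same content: your case $rm\neq 0$ is the weakly-$J$ hypothesis, and your case $rm=0$, $m\neq 0$ (where $r\in Z(M)\subseteq(J(R)M:M)$ forces a contradiction) is precisely the statement that $\{0\}$ is a $J$-submodule, which is what the paper feeds into Corollary~\ref{quotient}(3). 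Your version has the merit of being self-contained — it needs neither Corollary~\ref{quotient} nor the external characterization from \cite{Haniece2}, only the unstated but standard reading of $Z(M)$ as the zero-divisors on $M$ — while the paper's version is shorter given the machinery already in place. One small correction to your closing aside: the paper's route does not require $N\subseteq J(R)M$; that hypothesis appears in \cite[Theorem 1]{Haniece2} only for the characterization of general $J$-submodules via $M/N$, and for $N=\{0\}$ it is vacuous, so the quotient-based argument is just as unrestricted as yours.
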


\begin{proof}
Let $N$ be a weakly $J$-submodule of an $J$-presimplifiable $R$-module $M$.
Since $\{0\}$ is a a $J$-submodule of $M$, the result follows by (3) of
Corollary \ref{quotient}.
\end{proof}

\begin{proposition}
\label{d}Let $M_{1},M_{2},...,M_{k}$ be $R$-modules and consider the
$R$-module $M=M_{1}\times\cdots\times M_{k}$. If $N=N_{1}\times\cdots\times
N_{k}$ is a weakly $J$-submodule of $M$, then $N_{i}$ is a weakly
$J$-submodule of $M_{i}$ for all $i$ such that $N_{i}\neq M_{i}$.
\end{proposition}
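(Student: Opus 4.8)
The plan is to fix an index $i$ with $N_i\neq M_i$ and reduce the weakly $J$-submodule condition for $N_i\subseteq M_i$ to the one already assumed for $N\subseteq M$, by transporting a hypothetical witness across the coordinate embedding $M_i\hookrightarrow M$. Note first that $N_i\neq M_i$ just says $N_i$ is proper in $M_i$, so it is legitimate to ask whether $N_i$ is a weakly $J$-submodule. One cannot simply apply Proposition \ref{f}(1) to the projection $M\longrightarrow M_i$, since its kernel $M_1\times\cdots\times M_{i-1}\times 0\times M_{i+1}\times\cdots\times M_k$ need not lie in $N$ unless $N_j=M_j$ for every $j\neq i$; using the embedding instead avoids this.

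The key preliminary step is to compute the residual $(J(R)M:M)$ for $M=M_1\times\cdots\times M_k$. Since $J(R)M=J(R)M_1\times\cdots\times J(R)M_k$ and $rM=rM_1\times\cdots\times rM_k$ for any $r\in R$, we have $rM\subseteq J(R)M$ if and only if $rM_j\subseteq J(R)M_j$ for every $j$; that is, $(J(R)M:M)=\bigcap_{j=1}^{k}(J(R)M_j:M_j)$. In particular $(J(R)M:M)\subseteq(J(R)M_i:M_i)$.

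Now suppose $r\in R$ and $m_i\in M_i$ satisfy $0\neq rm_i\in N_i$ and $r\notin(J(R)M_i:M_i)$; I must produce $m_i\in N_i$. Let $m=(0,\ldots,0,m_i,0,\ldots,0)\in M$ have $m_i$ in the $i$-th coordinate and zeros elsewhere. Then the $i$-th coordinate of $rm$ is $rm_i\neq 0$, so $rm\neq 0$, while every coordinate of $rm$ lies in the corresponding $N_j$ (the $i$-th is $rm_i\in N_i$, the others are $0$); hence $0\neq rm\in N$. By the preliminary step and the hypothesis on $r$, we also have $r\notin(J(R)M:M)$. Since $N$ is a weakly $J$-submodule of $M$, it follows that $m\in N$, and therefore $m_i\in N_i$, as required.

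I do not expect a real obstacle here; the only points deserving care are the identification of $(J(R)M:M)$ with $\bigcap_j(J(R)M_j:M_j)$ and the observation that the coordinate embedding preserves the condition $rm\neq 0$ (because it is injective), both of which are routine.
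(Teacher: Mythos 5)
Your argument is correct and is essentially the paper's own proof: embed $m_i$ as $(0,\ldots,0,m_i,0,\ldots,0)\in M$, note that $0\neq rm\in N$ and $r\notin(J(R)M:M)$, and apply the weakly $J$-submodule hypothesis on $N$. The only difference is that you spell out the containment $(J(R)M:M)=\bigcap_{j}(J(R)M_j:M_j)\subseteq(J(R)M_i:M_i)$, which the paper passes over with ``clearly''.
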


\begin{proof}
Let $N_{i}\neq M_{i}$ for some $i=1,...,k$. We show that $N_{i}$ is a weakly
$J$-submodule of $M_{i}$. Let $r\in R$ and $m_{i}\in M_{i}$ such that $0\neq
rm_{i}\in N_{i}$ and $r\notin(J(R)M_{i}:M_{i})$. Since $0\neq r.(0,...,0,m_{i}%
,0...,0)\in N$ and clearly $r\notin(J(R)M:M),$ we conclude $(0,...,0,m_{i}%
,0...,0)\in N$. Thus, $m_{i}\in N_{i}$ and $N_{i}$ is a weakly $J$-submodule
of $M_{i}$.
\end{proof}

If $N_{1}$ and $N_{2}$ are weakly $J$-submodules of $R$-modules $M_{1}$ and
$M_{2}$ respectively, then $N_{1}\times N_{2}$ need not be a weakly
$J$-submodule of $M_{1}\times M_{2}$. For example, consider the $%
\mathbb{Z}
$-modules $M_{1}=%
\mathbb{Z}
_{6}$ and $M_{2}=%
\mathbb{Z}
(+)\left(
\mathbb{Z}
_{2}\times%
\mathbb{Z}
_{2}\right)  $. Then $N_{1}=\left\langle \bar{0}\right\rangle $ and
$N_{2}=0(+)\left\langle (\bar{1},\bar{0})\right\rangle $ are weakly
$J$-submodules of $M_{1}$ and $M_{2}$ respectively. On the other hand,
$N_{1}\times N_{2}$ is not a weakly $J$-submodule. Indeed, $3\cdot(\bar
{2},(0,(1,0)))=(\bar{0},(0,(1,0)))\in N_{1}\times N_{2}$ but clearly
$3\notin(J(%
\mathbb{Z}
)(M_{1}\times M_{2}):(M_{1}\times M_{2}))$ and $(\bar{2},(0,(1,0)))\notin
N_{1}\times N_{2}$.

Let $I$ be a proper ideal of $R$ and $N$ be a submodule of an $R$-module $M$ .
In the following proposition, the notations $Z_{I}(R)$ and $Z_{N}(M)$ denote
the sets $\{r\in R:rs\in I$ for some $s\in R\backslash I\}$ and $\{r\in
R:rm\in N$ for some $m\in M\backslash N\}$.

\begin{proposition}
\label{S}Let $S$ be a multiplicatively closed subset of a ring $R$ such that
$S^{-1}(J(R))=J(S^{-1}R)$ and $M$ be an $R$-module. Then

\begin{enumerate}
\item If $N$ is a weakly $J$-submodule of $M$ and $S^{-1}N\neq S^{-1}M$, then
$S^{-1}N$ is a weakly $J$-submodule of the $S^{-1}R$-module $S^{-1}M$.

\item If $S^{-1}N$ is a weakly $J$-submodule of $S^{-1}M$ and $S\cap
Z(M)=S\cap Z_{(J(R)M:M)}(R)=S\cap Z_{N}(M)=\emptyset$, then $N$ is a weakly
$J$-submodule of $M$.
\end{enumerate}
\end{proposition}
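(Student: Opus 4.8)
The plan is to handle the two parts separately, in each case transporting a single instance of the defining implication between $M$ and $S^{-1}M$, and using throughout the hypothesis $S^{-1}(J(R))=J(S^{-1}R)$ together with the standard identity $S^{-1}(J(R)M)=\bigl(S^{-1}J(R)\bigr)S^{-1}M=J(S^{-1}R)\,S^{-1}M$.

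For part (1), $S^{-1}N$ is proper by hypothesis, so take $\frac{r}{s}\in S^{-1}R$ and $\frac{m}{t}\in S^{-1}M$ with $0\neq\frac{r}{s}\cdot\frac{m}{t}\in S^{-1}N$ and $\frac{r}{s}\notin(J(S^{-1}R)S^{-1}M:S^{-1}M)$. Writing $\frac{rm}{st}=\frac{n}{u}$ with $n\in N$, $u\in S$, and clearing this equality in $M$ by a suitable $v\in S$, one gets $vu\,rm=vst\,n\in N$; put $m'=vu\,m$, so $rm'\in N$, and $rm'\neq0$ because $\frac{rm}{st}\neq0$ in $S^{-1}M$ means $w\,rm\neq0$ for every $w\in S$. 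Also $r\notin(J(R)M:M)$, since $rM\subseteq J(R)M$ would give $\frac{r}{s}S^{-1}M\subseteq S^{-1}(J(R)M)=J(S^{-1}R)S^{-1}M$, contradicting the choice of $\frac{r}{s}$. Applying that $N$ is a weakly $J$-submodule to $0\neq rm'\in N$ gives $m'=vu\,m\in N$, hence $\frac{m}{t}=\frac{vu\,m}{vu\,t}\in S^{-1}N$, as needed.

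For part (2), first $N$ is proper, since $N=M$ would force $S^{-1}N=S^{-1}M$. Let $r\in R$, $m\in M$ with $0\neq rm\in N$ and $r\notin(J(R)M:M)$; the goal is $m\in N$. In $S^{-1}M$ we have $\frac{r}{1}\cdot\frac{m}{1}=\frac{rm}{1}\in S^{-1}N$, and this is nonzero: $s\,rm=0$ for some $s\in S$ would, since $rm\neq0$, put $s\in S\cap Z(M)=\emptyset$. The crucial step is to check $\frac{r}{1}\notin(J(S^{-1}R)S^{-1}M:S^{-1}M)$. If it were in there, then $\frac{rx}{1}\in S^{-1}(J(R)M)$ for every $x\in M$, so for each $x$ there is $s_x\in S$ with $s_x\,rx\in J(R)M$; assembling this pointwise information into a single $s\in S$ with $sr\in(J(R)M:M)$ and then using $S\cap Z_{(J(R)M:M)}(R)=\emptyset$ (so that $s$ is a non-zero-divisor modulo $(J(R)M:M)$) forces $r\in(J(R)M:M)$, a contradiction. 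Hence $\frac{r}{1}\notin(J(S^{-1}R)S^{-1}M:S^{-1}M)$, and since $S^{-1}N$ is a weakly $J$-submodule of $S^{-1}M$ we get $\frac{m}{1}\in S^{-1}N$. Finally, $\frac{m}{1}\in S^{-1}N$ yields $w\,m\in N$ for some $w\in S$, and $S\cap Z_N(M)=\emptyset$ makes $w$ a non-zero-divisor relative to $N$, so $m\in N$; thus $N$ is a weakly $J$-submodule of $M$.

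The easy parts are the non-vanishing bookkeeping (why $\frac{rm}{1}\neq0$ and $rm'\neq0$, which is exactly what $S\cap Z(M)=\emptyset$ is for) and the final descent (which uses $S\cap Z_N(M)=\emptyset$). The main obstacle is the crucial step of part (2): transporting ``$r\notin(J(R)M:M)$'' across the localization. Since in general only the inclusion $S^{-1}(J(R)M:M)\subseteq(J(S^{-1}R)S^{-1}M:S^{-1}M)$ holds, one cannot simply localize the assumption; one must first produce a single $s\in S$ with $srM\subseteq J(R)M$ out of the pointwise data $s_x\,rx\in J(R)M$, and only then can the hypothesis $S\cap Z_{(J(R)M:M)}(R)=\emptyset$ be applied to cancel $s$. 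Extracting that uniform $s$ is the delicate point — this is where any finiteness assumption on $M$ (e.g.\ a finite generating set) would come in, and it is the step that must be argued with care.
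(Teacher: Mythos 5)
Your part (1) is correct and is essentially the paper's argument: clear denominators to obtain $0\neq (vu)rm\in N$, invoke the weakly $J$-property of $N$, and transport membership in the colon via the always-valid inclusion $S^{-1}(J(R)M:M)\subseteq\bigl(S^{-1}(J(R)M):S^{-1}M\bigr)=\bigl(J(S^{-1}R)S^{-1}M:S^{-1}M\bigr)$; whether you transport $r\notin(J(R)M:M)$ first (as you do) or push $ur\in(J(R)M:M)$ forward afterwards (as the paper does) is immaterial. The properness check, the non-vanishing bookkeeping in both parts, and the final descent in part (2) using $S\cap Z(M)=\emptyset$ and $S\cap Z_{N}(M)=\emptyset$ are all fine and match the paper.

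In part (2), however, you stop at exactly the step that matters: producing a single $s\in S$ with $sr\in(J(R)M:M)$ from the pointwise data $s_{x}rx\in J(R)M$. Flagging it as ``delicate'' is not proving it, and with the hypotheses as stated it cannot be done in general: $M$ is an arbitrary module, and pointwise you only learn that some $s_{x_{0}}\in S$ satisfies $s_{x_{0}}(rx_{0})\in J(R)M$ with $rx_{0}\notin J(R)M$, i.e.\ $s_{x_{0}}\in Z_{J(R)M}(M)$ --- a set about which the hypotheses say nothing, since $S\cap Z_{(J(R)M:M)}(R)=\emptyset$ concerns the ideal $(J(R)M:M)$, not the submodule $J(R)M$. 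For comparison, the paper's proof handles this point by simply writing $\bigl(J(S^{-1}R)S^{-1}M:S^{-1}M\bigr)=S^{-1}(J(R)M:M)$, i.e.\ by commuting the colon with localization; that equality is standard when $M$ is finitely generated (then your uniform $s$ is the product of the $s_{x_{i}}$ over a finite generating set), but for general $M$ only the inclusion $S^{-1}(J(R)M:M)\subseteq\bigl(J(S^{-1}R)S^{-1}M:S^{-1}M\bigr)$ holds, so the paper is quietly using an unstated finiteness assumption (or would need an extra hypothesis such as $S\cap Z_{J(R)M}(M)=\emptyset$). In short: your outline is the same as the paper's, but the step you left open is a genuine gap in your write-up as it stands --- and it is precisely the point the paper's own proof asserts without justification; under the finite-generation assumption that pervades the rest of the paper, your argument closes exactly as indicated.
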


\begin{proof}
(1) Suppose that $0\neq\frac{r}{s_{1}}\frac{m}{s_{2}}\in S^{-1}N$ for
$\frac{r}{s_{1}}\in S^{-1}R$ and $\frac{m}{s_{2}}\in S^{-1}M$ . Then $0\neq
urm\in N$ for some $u\in S$. Since $N$ is a weakly $J$-submodule, we have
either $ur\in(J(R)M:M)$ or $m\in N$. Hence, we conclude that either $\frac
{r}{s_{1}}=\frac{ur}{us_{1}}\in S^{-1}(J(R)M:M)=(S^{-1}J(R)$ $S^{-1}%
M:S^{-1}M)=(J(S^{-1}R)$ $S^{-1}M:S^{-1}M)$ or $\frac{m}{s_{2}}\in S^{-1}N$ as required.

(2) Let $r\in R$ and $m\in M$ such that $0\neq rm\in N$. From our assumption
$S\cap Z(M)=\emptyset$, we have $0\neq\frac{r}{1}\frac{m}{1}\in S^{-1}N$ which
implies that $\frac{r}{1}\in(J(S^{-1}R)S^{-1}M:S^{-1}M)=S^{-1}(J(R)M:M)$ or
$\frac{m}{1}\in S^{-1}N.$ Thus, either $ur\in(J(R)M:M)$ for some $u\in S$ or
$vm\in N$ for some $v\in S$. Since $S\cap Z_{(J(R)M:M)}(R)=S\cap
Z_{N}(M)=\emptyset$, we conclude that $r\in(J(R)M:M)$ or $m\in N$. Therefore,
$N$ is a weakly $J$-submodule of $M$.
\end{proof}

Recall that a proper submodule $N$ of an $R$-module $M$ is said to be weakly
primary if whenever , $0\neq rm\in N$ for $r\in R$ and $m\in M$, then
$r\in\sqrt{(N:M)}$ or $m\in N$, \cite{At2}. In the following, we show that
every weakly primary submodule $N$ of $M$ with $(N:M)$ contained in $J(R)$ is
a weakly $J$-submodule of $M$.

\begin{proposition}
\label{wp}Let $M$ be an $R$-module. If $N$ is a weakly primary submodule of
$M$ such that $(N:M)\subseteq J(R)$, then $N$ is a weakly $J$-submodule of $M$.
\end{proposition}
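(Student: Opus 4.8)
The plan is to argue directly from the definitions. Suppose $N$ is a weakly primary submodule of $M$ with $(N:M)\subseteq J(R)$. First I would note that $N$ is proper in $M$ by hypothesis, so it suffices to verify the defining implication. Take $r\in R$ and $m\in M$ with $0\neq rm\in N$ and assume $r\notin(J(R)M:M)$; the goal is to conclude $m\in N$. Since $N$ is weakly primary, we get either $r\in\sqrt{(N:M)}$ or $m\in N$, and in the second case we are done, so assume $r\in\sqrt{(N:M)}$.

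The key step is then to derive a contradiction from $r\in\sqrt{(N:M)}$ together with $r\notin(J(R)M:M)$. Since $(N:M)\subseteq J(R)$ and $J(R)$ is a radical ideal (it is an intersection of maximal, hence prime, ideals), we have $\sqrt{(N:M)}\subseteq\sqrt{J(R)}=J(R)$. Hence $r\in J(R)$. Finally I would translate $r\in J(R)$ into $r\in(J(R)M:M)$: indeed $rM\subseteq J(R)M$ means exactly $r\in(J(R)M:M)$, and $r\in J(R)$ certainly gives $rM\subseteq J(R)M$. This contradicts the assumption $r\notin(J(R)M:M)$, so the case $r\in\sqrt{(N:M)}$ cannot occur, and therefore $m\in N$, as required.

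I do not expect any serious obstacle here; the only point that needs a moment's care is the observation that $J(R)$ is radical, so that $\sqrt{(N:M)}\subseteq J(R)$ whenever $(N:M)\subseteq J(R)$, and the trivial inclusion $J(R)\subseteq(J(R)M:M)$. Both are standard, so the proof will be short. One could alternatively package this as: every weakly primary submodule $N$ with $(N:M)\subseteq J(R)$ in fact satisfies $\sqrt{(N:M)}\subseteq J(R)=(J(R)M:M)$, whence the weakly primary condition immediately upgrades to the weakly $J$-submodule condition.
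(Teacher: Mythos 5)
Your proof is correct and is essentially the paper's argument: the paper reasons contrapositively ($r\notin(J(R)M:M)$ gives $r\notin J(R)$, hence $r\notin\sqrt{(N:M)}$ since $(N:M)\subseteq J(R)$ and $J(R)$ is radical, so the weakly primary property forces $m\in N$), whereas you phrase the same two facts ($J(R)\subseteq(J(R)M:M)$ and $\sqrt{(N:M)}\subseteq J(R)$) as a contradiction. No substantive difference.
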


\begin{proof}
Let $r\in R$ and $m\in M$ such that $0\neq rm\in N$ and $r\notin(J(R)M:M)$.
Then $r\notin J(R)$ and so by assumption, $r\notin\sqrt{(N:M)}$. It follows
that $m\in N$ as $N$ is a weakly primary submodule of $M$.
\end{proof}

Let $M$ be an $R$-module. Next, we justify the relation between the weakly
$J$-ideals of the idealization ring $R(+)M$ and the weakly $J$-submodules of
$M$.

\begin{proposition}
\label{id}Let $I$ be an ideal of a ring $R$ and $N$ a proper submodule of an
$R$-module $M$.

\begin{enumerate}
\item If $I(+)N$ is a weakly $J$-ideal of $R(+)M$, then $I$ is a weakly
$J$-ideal of $R$ and $N$ is a weakly $J$-submodule of $M$.

\item Suppose $I$ is a $J$-ideal of $R$ and $N$ is a weakly $J$-submodule of
$M$. If for $r\in R,m\in M$ with $rm=0$ but $r\notin J(R)$ and $m\notin N$,
$(I:\left\langle r\right\rangle )=0$, then $I(+)N$ is a weakly $J$-ideal of
$R(+)M$.
\end{enumerate}
\end{proposition}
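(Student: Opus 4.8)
The plan is to transfer the weakly $J$-ideal condition on $I(+)N$ to and from conditions on $I$ and $N$ by multiplying ``split'' elements of the form $(a,0)$ and $(0,m)$, using the two facts quoted in the introduction: $I(+)N$ is an ideal if and only if $IM\subseteq N$, and $J(R(+)M)=J(R)(+)M$, so that $(a,x)\notin J(R(+)M)$ is equivalent to $a\notin J(R)$.

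For (1), since $I(+)N$ is a proper ideal we have $IM\subseteq N$; if $I=R$ then $M=RM\subseteq N$, contradicting that $N$ is proper, so $I$ is proper. To see $I$ is a weakly $J$-ideal, take $a,b\in R$ with $0\neq ab\in I$ and $a\notin J(R)$; then $(0,0)\neq(a,0)(b,0)=(ab,0)\in I(+)N$ while $(a,0)\notin J(R(+)M)$, so $(b,0)\in I(+)N$ and $b\in I$. To see $N$ is a weakly $J$-submodule, take $r\in R$, $m\in M$ with $0\neq rm\in N$ and $r\notin(J(R)M:M)$; since $J(R)\subseteq(J(R)M:M)$ we get $r\notin J(R)$, hence $(r,0)\notin J(R(+)M)$, and $(0,0)\neq(r,0)(0,m)=(0,rm)\in I(+)N$ gives $(0,m)\in I(+)N$, i.e.\ $m\in N$.

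For (2), note first that $I(+)N$ is an ideal of $R(+)M$ because $IM\subseteq N$ (which must hold for the assertion to be meaningful), and it is proper since the $J$-ideal $I$ is proper. Let $(a,x),(b,y)\in R(+)M$ with $(0,0)\neq(a,x)(b,y)=(ab,\,ay+bx)\in I(+)N$ and $(a,x)\notin J(R(+)M)$, so $a\notin J(R)$, $ab\in I$ and $ay+bx\in N$. Because $I$ is a $J$-ideal and $a\notin J(R)$, from $ab\in I$ we get $b\in I$ (this argument only uses $ab\in I$, so it is valid even when $ab=0$); then $bx\in bM\subseteq IM\subseteq N$, and hence $ay=(ay+bx)-bx\in N$. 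If $ay=0$ and $y\notin N$, then applying the extra hypothesis with $(r,m)=(a,y)$ yields $(I:\langle a\rangle)=0$; but $b\langle a\rangle=\langle ab\rangle\subseteq I$ gives $b\in(I:\langle a\rangle)$, so $b=0$, whence $(a,x)(b,y)=(0,ay)=(0,0)$, contradicting our choice — thus $ay=0$ forces $y\in N$. If $ay\neq0$, then $0\neq ay\in N$ with $a\notin J(R)$, and, invoking that $N$ is a weakly $J$-submodule, we again conclude $y\in N$. In either case $(b,y)=(b,0)+(0,y)\in I(+)N$, so $I(+)N$ is a weakly $J$-ideal.

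The step I expect to be the crux is the sub-case $ay\neq0$ of (2): the defining property of a weakly $J$-submodule demands $a\notin(J(R)M:M)$, whereas the idealization only supplies $a\notin J(R)$, and in general $J(R)\subsetneq(J(R)M:M)$. Bridging this is the delicate point — one must either observe that in the situation at hand the two conditions on $a$ coincide for the relevant $a$, or else treat separately the case $a\in(J(R)M:M)\setminus J(R)$, combining $ay\in J(R)M\cap N$ with the $J$-ideal hypothesis on $I$ and the residual assumption. Everything else is a routine unwinding of the idealization multiplication, and the split into $ay=0$ versus $ay\neq0$ is exactly what the extra hypotheses of (2) are designed to separate.
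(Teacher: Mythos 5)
Your argument reproduces the paper's own proof almost step for step: in (1) the same split elements $(a,0)(b,0)$ and $(r,0)(0,m)$ (the paper only differs in citing its companion result on weakly $J$-ideals of $R(+)M$ for the statement about $I$ instead of reproving it), and in (2) the same chain — $r_2\in I$ from the $J$-ideal hypothesis, $r_1m_2\in N$ after absorbing $r_2m_1\in IM\subseteq N$, then the dichotomy $r_1m_2=0$ (residual hypothesis forces $r_2=0$, hence a zero product, contradiction) versus $r_1m_2\neq 0$ (weakly $J$-submodule hypothesis).

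The crux you flag at the end — that the definition of a weakly $J$-submodule requires $r_1\notin(J(R)M:M)$, while the idealization only gives $r_1\notin J(R)$ — is not bridged in the paper either: its proof simply writes ``since clearly $r_1\notin(J(R)M:M)$'', which amounts to the implication $r_1\notin J(R)\Rightarrow r_1\notin(J(R)M:M)$, and that implication is false in general, since $J(R)$ can be strictly smaller than $(J(R)M:M)$ (e.g. $R=\mathbb{Z}$, $M=\mathbb{Z}_2$, $r_1=2$). So you have not missed an idea hidden in the paper; your write-up is exactly as complete as the published argument. The step is automatic precisely when $(J(R)M:M)=J(R)$, which holds for instance when $M$ is a finitely generated faithful multiplication module (where $(IM:M)=I$) or a nonzero finitely generated module over a local ring; note also that when $J(R)=0$ the troublesome subcase cannot occur at all, since then $r_1\in(J(R)M:M)=(0:M)$ forces $r_1m_2=0$. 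As stated, either the proposition should carry such a hypothesis or the case $r_1\in(J(R)M:M)\setminus J(R)$ with $r_1m_2\neq 0$ needs a separate argument; your honest flag points at a step the paper glosses over, not at a defect peculiar to your proof.
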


\begin{proof}
(1)Suppose $I(+)N$ is a weakly $J$-ideal of $R(+)M$. Then $I$ is a weakly
$J$-ideal of $R$ by \cite[Theorem 5]{Haniece3}. Now, let $r\in R$ and $m\in M$
such that $0\neq rm\in N$ and $r\notin(J(R)M:M)$. Then $(0,0)\neq
(r,0)(0,m)=(0,rm)\in I(+)N$ and clearly $(r,0)\notin J(R(+)M)$. Therefore,
$(0,m)\in I(+)N$ and so $m\in N$ as required.

(2) Let $(r_{1},m_{1}),(r_{2},m_{2})\in R(+)M$ such that $(0,0)\neq
(r_{1},m_{1})(r_{2},m_{2})\in$ $I(+)N$ and $(r_{1},m_{1})\notin J(R(+)M)$.
Then $r_{1}r_{2}\in I$ and so $r_{2}\in I$ as $I$ is a $J$-ideal and
$r_{1}\notin J(R)$. Moreover, $r_{1}m_{2}+r_{2}m_{1}\in N$ and $r_{2}m_{1}\in
IM\subseteq N$ imply that $r_{1}m_{2}\in N$. Suppose $r_{1}m_{2}=0$ but
$m_{2}\notin N$. Then by assumption $r_{2}\in(I:\left\langle r_{1}%
\right\rangle )=0$ and so $(r_{1},m_{1})(r_{2},m_{2})=(0,0)$, a contradiction.
If $r_{1}m_{2}\neq0$, then $m_{2}\in N$ since clearly $r_{1}\notin(J(R)M:M)$
and $N$ is a weakly $J$-submodule of $M$. Thus, $(r_{2},m_{2})\in$ $I(+)N$ as needed.
\end{proof}

\end{document}